\theoremstyle{plain}
 \newtheorem{theorem}{Theorem}[section]
 \newtheorem{lemma}[theorem]{Lemma}
 \newtheorem{proposition}[theorem]{Proposition}
\theoremstyle{definition}
 \newtheorem{definition}{Definition}[section]
 \newtheorem{remark}{Remark}[section]
\theoremstyle{remark}
\numberwithin{equation}{section}
\renewcommand{\l}{\left}
\renewcommand{\r}{\right}
\newcommand{\cleq}{\lesssim}
\newcommand{\cgeq}{\gtrsim}
\newcommand{\ceq}{\approx} 
\newcommand{\eps}{\varepsilon}
\newcommand{\R}{{\mathbb R}}
\newcommand{\Z}{{\mathbb Z}}
\newcommand{\N}{{\mathbb N}}
\def\norm#1{\left\Vert #1 \right\Vert} 
\def\tbra#1#2{\left\langle #1 , #2\right\rangle} 
\newcommand{\cG}{{\mathcal G}} 
\newcommand{\cF}{{\mathcal F}} 
\DeclareMathOperator{\esssup}{ess\sup}
\newcommand{\arXiv}[1]{\href{https://arxiv.org/abs/#1}{arXiv:#1}}
\begin{document}

\title[Lifespan estimate for DW]{The sharp estimate of the lifespan for the semilinear wave equation with time-dependent damping}
\author[M. Ikeda]{Masahiro Ikeda}
\address{Center for Advanced Intelligence Project, Riken, Nihonbashi 1-chome Mitsui Building, 15th floor, 1-4-1 Nihonbashi, Chuo-ku, Tokyo 103-0027, Japan}
\email{masahiro.ikeda@riken.jp}
\author[T. Inui]{Takahisa Inui}
\address{Department of Mathematics, Faculty of Science Division I, Tokyo University of Science\\ 1-3 Kagurazaka, Shinjuku-ku, Tokyo 162-8601, Japan}
\email{inui@rs.tus.ac.jp}
\date{}
\keywords{damped wave equation, time-dependent damping, lifespan, Fujita exponent}
\maketitle

\begin{abstract}
We consider the following semilinear wave equation with time-dependent damping.
\begin{align}
\tag{NLDW}
\l\{
\begin{array}{ll}
\partial_t^2 u - \Delta u + b(t)\partial_t u = |u|^{p}, & (t,x) \in [0,T) \times \R^n,
\\
u(0,x)=\eps u_0(x), u_t(0,x)=\eps u_1(x), & x \in \R^n,
\end{array}
\r.
\end{align}
where $n \in \N$, $p>1$, $\eps>0$, and $b(t)\ceq (t+1)^{-\beta}$ with $\beta \in [-1,1)$. It is known that small data blow-up occurs when $1<p< p_F$ and, on the other hand, small data global existence holds when $p>p_F$, where $p_F:=1+2/n$ is the Fujita exponent. The sharp estimate of the lifespan was well studied when $1<p< p_F$. In the critical case $p=p_F$, the lower estimate of the lifespan was also investigated. Recently, Lai and Zhou \cite{LaZh17a} obtained the sharp upper estimate of the lifespan when $p=p_F$ and $b(t)=1$. In the present paper, we give the sharp upper estimate of the lifespan when $p=p_F$ and $b(t)\ceq (t+1)^{-\beta}$ with $\beta \in [-1,1)$ by the Lai--Zhou method. 
\end{abstract}

\tableofcontents


\section{Introduction}

\subsection{Background}

We consider the following semilinear wave equation with time-dependent damping.
\begin{align}
\label{NLDW}
\tag{NLDW}
\l\{
\begin{array}{ll}
\partial_t^2 u - \Delta u + b(t)\partial_t u = |u|^{p}, & (t,x) \in [0,T) \times \R^n,
\\
u(0,x)=\eps u_0(x), u_t(0,x)=\eps u_1(x), & x \in \R^n,
\end{array}
\r.
\end{align}
where $n \in \N$, $p>1$, $u=u(t,x)$ is a real-valued unknown function, $b=b(t)$ is a given smooth positive function, $u_0=u_0(x)$ and $u_1=u_1(x)$ are given real-valued functions, and $\eps$ is a small positive parameter.

The linear damped wave equation with $b(t)=1$ is derived from a heat conduction equation with a time delay effect (see \cite{Str11}). 

When $b(t)=0$, the linear equation is the wave equation and its energy is conserved by the solution flow. On the other hand, the energy is decreasing in the case of $b(t) \neq0$ so that we call the term $b(t)u_t$ a damping term. Here, the coefficient $b(t)$ denotes the strength of the damping. In this paper, we are interested in how the damping affects the global behavior of the solution to \eqref{NLDW}. To see this, we assume that $b$ satisfies $b\in C^1((0,\infty))$ and 
\begin{align*}
b_1 (t+1)^{-\beta} \leq b(t) \leq b_2 (t+1)^{-\beta}, 
\quad
|b'(t)| \leq b_3(t+1)^{-\beta-1},
\end{align*}
for $t\geq0$ with some $\beta \in \R$ and some positive constants $b_1$, $b_2$, and $b_3$. 

The nonlinear term $|u|^p$ also affects the global behavior of the solutions. As a pioneering work, Fujita \cite{Fuj66} found the critical exponent $p_F:=1+2/n$, which is called the Fujita exponent, for the semilinear heat equation $v_t - \Delta v =v^p$ with the initial data $v(0)=v_0 \geq 0$. Namely,  if $p<p_F$, the solution blows up in finite time even if the initial data is small and, if $p>p_F$, the solution exists globally in time when the initial data is small. After that, it is proved that the solution blows up in finite time in the critical case $p=p_F$ (see Hayakawa \cite{Hay73}, Sugitani \cite{Sug75}, Kobayashi, Sirao, and Tanaka \cite{KST77}, and Weissler \cite{Wei81}).

We say that small data global existence holds when the following SDGE holds and that small data blow-up holds when the following SDBU holds. 

\begin{itemize}
\item[(SDGE)] For any initial data $(u_0,u_1)$, there exists $\eps_\ast>0$ such that the solution exists globally in time for any $\eps \in (0,\eps_\ast)$. 
\item [(SDBU)] There exists an initial data $(u_0,u_1)$ and $\eps_\ast>0$ such that the solution blows up in finite time for any $\eps \in (0,\eps_\ast)$. 
\end{itemize}

For the so-called classical damping, that is, $b(t)\equiv 1$, Li and Zhou \cite{LiZh95} proved that small data blow-up holds and obtain the sharp estimate of the lifespan when $n=1$ or $n=2$ and $p\leq p_F$. Moreover, they also proved small data global existence if $p >p_F$ when $n=1$ or $n=2$. For $n=3$, Nishihara \cite{Nis03} proved the similar result. 
For higher dimensional case, \textit{i.e.} $n\geq 4$, Todorova and Yordanov \cite{ToYo01} showed that the critical exponent is the Fujita exponent and Zhang \cite{Zha01} proved that small data blow-up holds in the critical case. 

For \eqref{NLDW} with $b(t)=(t+1)^{-\beta}$ for $\beta \in (-1,1)$, Nishihara \cite{Nis11} and Lin, Nishihara, and Zhai \cite{LNZ12} proved that the critical exponent is the Fujita exponent. D'Abbicco, Lucente, and Reissig \cite{DLR13} discussed a more general variable coefficient and the initial data. When $\beta=-1$, Wakasugi \cite{Wak17} proved that small data global existence holds if $p>p_F$ and Fujiwara, Ikeda, and Wakasugi \cite{FIW16a} showed that small data blow-up holds if $1<p<p_F$. However, it was not known whether small data blow-up holds when $\beta=-1$ and $p=p_F$. We will see that it holds by giving the sharp upper estimate of the lifespan. 

When $\beta<-1$ (overdamping) or $\beta \geq 1$ (non-effective damping), the Fujita exponent is no longer the critical exponent. When $\beta<-1$, recently, Ikeda and Wakasugi \cite{IkWap} proved that small data global existence holds for any $p>1$. Namely, there is no critical exponent. When $\beta \geq 1$, it is known that another critical exponent appears. See \cite{DLR15, LTW17a} and references therein in the non-effective damping case. 

In the present paper, we would like to obtain the sharp upper estimate of the lifespan when $b(t)=(t+1)^{-\beta}$ with $\beta\in [-1,1)$. 

\subsection{Known estimates of the lifespan}

As stated above, there are many results for the global behavior of the solutions to \eqref{NLDW}. 
In this subsection, we focus on the estimates of the lifespan.  

To state the estimates, we give the definitions of an energy solution and its lifespan.


\begin{definition}[Energy solution and its lifespan]
Let $T>0$ and $(u_0,u_1)\in H^1(\R^n) \times L^2(\R^n)$. We say that $u$ is an energy solution to \eqref{NLDW} on $[0,T)$ if $u \in C^2([0,T);H^{-1}(\R^n)) \cap C^1([0,T);L^{2}(\R^n) ) \cap C([0,T);H^{1}(\R^n))$ satisfies the initial condition $u(0)=\eps u_0$ and $u_t(0)=\eps u_1$ and satisfies the equation
\[ \partial_t^2 u - \Delta u + b(t)\partial_t u = |u|^{p},  \]
in the sense of $C([0,T):H^{-1}(\R^n))$. Moreover, the lifespan $T(\eps)=T(\eps u_0, \eps u_1)$ of an energy solution for \eqref{NLDW} is defined by 
\begin{align*}
T(\eps):= \sup \{ T\in(0,\infty]: \text{$u$ is a unique energy solution for \eqref{NLDW} on $[0,T)$}\}.
\end{align*}
\end{definition}




The known results for the estimates of the lifespan are summarized in Table 1, where we consider the coefficient $b(t)=(t+1)^{-\beta}$ with $\beta<1$. 

\begin{table}[htb]
{\renewcommand\arraystretch{2}
\begin{tabular}{|c|c|c|c|} \hline
$\beta \backslash p$
&  $\displaystyle 1< p < p_F$
&  $\displaystyle p= p_F$
\\ \hline
$\beta<-1$
& $T(\eps)=\infty$ \cite{IkWap}
& $T(\eps)=\infty$ \cite{IkWap}
\\ \hline
$\beta = -1$
& $T(\eps) \ceq \exp \l( C \eps^{-\frac{1}{\frac{1}{p-1}-\frac{n}{2}} } \r)$
& $\exp \l(\exp \l( C \eps^{-(p-1)} \r)\r) \leq T(\eps)$
\\ 
 
&  lower and upper \cite{FIW16a}
& lower \cite{FIW16a}
\\ \hline
$-1 < \beta < 1$
& $T(\eps) \ceq C \varepsilon^{-\frac{1}{\l(\frac{1}{p-1}-\frac{n}{2}\r)(1+\beta)} }$
& $\exp (C\eps^{-(p-1)}) \leq T(\eps) \leq \exp(C\eps^{-p})$ 
\\ 
\ 
& lower \cite{FIW16a} and upper \cite{IkWa15}
& lower  \cite{IkOg16,FIW16a} and upper \cite{IkOg16}
\\ 
\ 
&\  
& $ T(\eps) \leq \exp (C\eps^{-(p-1)})$ if $\beta=0$ \cite{LaZh17a}.
\\ \hline
\end{tabular}
}
\caption{Estimates of lifespan}
\end{table}

Ikeda and Wakasugi \cite{IkWap} proved small data global existence when $\beta<-1$ and $p>1$. Fujiwara, Ikeda, and Wakasugi \cite{FIW16a} obtained the lower bounds of the lifespan when $-1 \leq \beta<1$ and $1<p\leq p_F$ (see also \cite{IkOg16}). In the subcritical case $p<p_F$, the upper estimates of the lifespan were obtained by  \cite{IkWa15} ($-1<\beta<1$) and \cite{FIW16a} ($\beta=-1$). See also \cite{LiZh95, Nis03} when $\beta=0$ and $n=1,2,3$. In the critical case $p=p_F$, Ikeda and Ogawa \cite{IkOg16} obtained the upper estimate of the lifespan $T(\eps) \leq \exp(C \eps^{-p})$ when $-1<\beta<1$. However, this is not sharp. Recently, Lai and Zhou \cite{LaZh17a} obtained the sharp upper estimate of the lifespan when $\beta=0$. We will give the sharp upper estimate of the lifespan when $-1\leq \beta <1$ and $p=p_F$ in the present paper.

\subsection{Main result} 

In the present paper, we are interested in the upper estimate of the lifespan of the solution to \eqref{NLDW} with the Fujita exponent $p=p_F$. To this end, we assume that there exists $\beta \in [-1,1)\setminus \{0\}$ such that $b$ satisfies  
\begin{enumerate}
\item[(B1)] $b\in C^2((0,\infty))$,
\item[(B2)] $b(t) \ceq  (t+1)^{-\beta}$ for $t\geq0$,
\item[(B3)] $ |b'(t)| \ceq  (t+1)^{-\beta-1}$  for $t\geq0$,
\item[(B4)] $|b''(t)| \cleq  (t+1)^{-\beta-2}$ for $t\geq0$.
\end{enumerate}

\begin{remark}
\ 
\begin{enumerate}
\item The function $b(t)=(t+1)^{-\beta}$ with $\beta \in [-1,1) \setminus\{0\}$ satisfies (B1)--(B4). 
\item There is no function satisfying (B1)--(B4) with $\beta=0$. Indeed, (B3) is not compatible with (B2) when $\beta=0$. 
\item The classical damping $b(t)\equiv 1$ does not satisfy the third condition (B3), in particular, $|b'(t)| \cgeq (t+1)^{-\beta-1}$. In the classical damping case, we do not need to assume $|b'(t)| \cgeq (t+1)^{-\beta-1}$ or (B4). 
\end{enumerate}
\end{remark}

We define 
\begin{align*}
b^*:=\int_{0}^{\infty} e^{-\int_{0}^{\tau} b(s)ds}d\tau.
\end{align*}


Then, we obtain the following sharp upper estimate of the lifespan of the energy solution to \eqref{NLDW}. 
\begin{theorem}
\label{thm1.1}
Let $p=p_F$. Assume that $b$ satisfies (B1)--(B4) with some $\beta \in [-1,1)\setminus\{0\}$. 
Let $(u_0,u_1)\in (H^1(\R^n)\cap L^1(\R^n) )\times( L^2(\R^n)\cap L^1(\R^n))$ satisfy
\begin{align*}
\int_{\R^n} u_0(x) + b^* u_1(x) dx >0.
\end{align*}
Then, for sufficiently small $\eps>0$, the lifespan $T(\eps)$ of the corresponding energy solution is estimated by
\begin{align*}
T(\eps) \leq
\begin{cases}
\exp \l( \exp (C \eps^{-(p-1)}) \r)& \text{ if } \beta=-1,
\\
\exp (C \eps^{-(p-1)}) & \text{ if } \beta\in (-1,1)\setminus\{ 0\},
\end{cases}
\end{align*}
where $C$ is a positive constant independent of $\eps$. 
\end{theorem}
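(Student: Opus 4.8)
The plan is to follow the Lai--Zhou test-function argument, adapting it to the time-dependent damping. The starting point is the standard weak formulation: for a test function $\varphi \in C_0^\infty([0,T)\times\R^n)$ one multiplies the equation by $\varphi$, integrates over $[0,T)\times\R^n$, and integrates by parts in $t$ and $x$. Choosing $\varphi$ independent of $x$ — more precisely, taking $\varphi(t,x)=\psi(t)$ with a suitable time cutoff on $[0,T)$ — kills the $\Delta u$ term and reduces everything to the time variable. Set $F(t):=\int_{\R^n} u(t,x)\,dx$. Then $F$ is a (weak) solution of the ODE $F'' + b(t) F' = \int_{\R^n}|u|^p\,dx \ge c|F|^p$ by Jensen's inequality once we control the spatial support of $u$: since $u$ is an energy solution with $L^1$ data and finite propagation speed holds for the wave operator (the damping term does not enlarge the light cone), $\operatorname{supp} u(t,\cdot) \subset \{|x|\le t+R\}$, so $\int_{\R^n}|u|^p \ge C(t+R)^{-n(p-1)}|F(t)|^p$. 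With $p=p_F$ we have $n(p-1)=2$, which is exactly the borderline decay that produces the double-exponential / exponential lifespan.

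Next I would desingularize the damping. Introduce the integrating factor and the new time variable: let $m(t)=\exp\big(\int_0^t b(s)\,ds\big)$, so that $(m F')' = m \int|u|^p$, and let $G(t)=\int_0^t m(s)^{-1}\,ds$, which is a bijection of $[0,\infty)$ onto $[0,b^*)$ (finite since $b^*<\infty$; note $\beta<1$ makes $\int_0^\infty b = \infty$, giving $m(t)\to\infty$ and $G$ bounded). In the $G$-variable the equation for $H(G):=F(t(G))$ becomes $\ddot H = \big(m\, \tfrac{d}{dt}\big)\big(m F'\big) \cdot m = m^2 \int|u|^p \ge c\, m(t)^2 (t+R)^{-2} |H|^{p}$. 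The assumptions (B1)--(B4) are used precisely here: they give sharp two-sided bounds $m(t)\ceq \exp\big(\tfrac{1}{1-\beta}((t+1)^{1-\beta}-1)\big)$ for $\beta\in(-1,1)$ and $m(t)\ceq (t+1)$ for $\beta=-1$, and — crucially — a lower bound on $b^*-G(t)$ in terms of $m(t)^{-1}$ and $b(t)^{-1}$, which converts the weight $m(t)^2(t+R)^{-2}$ into a power of $(b^*-G)$ in the new variable. The sign condition $\int u_0 + b^* u_1 >0$ is exactly what guarantees that $H$ and $\dot H$ start positive in the rescaled picture (since $\dot H(0)=F'(0)=\eps\int u_1$ and one checks $H$ stays positive by a continuity/bootstrap argument using $\ddot H \ge 0$), so $|H|^p = H^p$ and we are dealing with a genuine blow-up ODE $\ddot H \ge c\,\omega(G)\,H^p$ on a finite $G$-interval.

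The final step is the ODE blow-up analysis on $[0,b^*)$. Writing $s = b^*-G \in (0,b^*]$ and $y(s)=H$, the inequality has the form $y'' \ge c\, s^{-\theta} y^p$ for an explicit exponent $\theta$ (coming from the sharp growth of $m$), with $y(b^*), y'(b^*-)$ of size $\gtrsim \eps$ — this is the structure where a Kato-type / Riccati argument forces blow-up before $s$ reaches $0$, and tracking the $\eps$-dependence of the blow-up time gives the bound $s_{\min} \gtrsim \exp(-C\eps^{-(p-1)})$, i.e. $G$ reaches $b^*$ only after a $G$-time that is exponentially close to $b^*$; translating back through $t = G^{-1}(\cdot)$, the map $G^{-1}$ is (double-)exponentially steep near $b^*$, yielding $T(\eps)\le \exp(C\eps^{-(p-1)})$ for $\beta\in(-1,1)\setminus\{0\}$ and $T(\eps)\le\exp(\exp(C\eps^{-(p-1)}))$ for $\beta=-1$ (where $G^{-1}$ near $b^*$ behaves like $\exp(\exp(\cdot))$ because $m(t)\ceq t+1$ makes $b^* - G(t) \ceq 1/(t+1)$... wait, one must be careful: for $\beta=-1$, $b^*-G(t)\ceq \log$-type, producing the extra exponential). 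The main obstacle is this last translation: getting the sharp constants in the relation between the rescaled blow-up time and the physical time $T(\eps)$, which requires the precise asymptotics of $m$ and of $b^* - G$ near $t=\infty$, and this is exactly where (B1)--(B4) and the two-sided bound on $|b'|$ (which is why $\beta=0$ is excluded) enter in an essential way. A slightly delicate point I would handle with care is justifying the weak ODE for $F$ with a time cutoff and passing to the limit $T\uparrow T(\eps)$, and confirming finite propagation speed for energy solutions of the damped equation so that the $(t+R)^{-2}$ weight is legitimate.
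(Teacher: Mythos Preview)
Your argument has a genuine gap at the very first step. The theorem assumes only $(u_0,u_1)\in (H^1\cap L^1)\times(L^2\cap L^1)$, with no compact support hypothesis. Finite speed of propagation therefore does \emph{not} give $\operatorname{supp} u(t,\cdot)\subset\{|x|\le t+R\}$, and the Jensen step $\int_{\R^n}|u|^p \ge C(t+R)^{-n(p-1)}|F(t)|^p$ is unjustified. In fact it is not even clear that $F(t)=\int_{\R^n} u(t,x)\,dx$ is well defined along the flow for merely $L^1$ data.

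More seriously, even if one added a compact-support assumption, the light-cone scale $(t+R)$ is the wrong one and would not yield the sharp exponent. The whole point of the Lai--Zhou method (and the paper's extension of it) is to exploit the \emph{diffusive} behavior of the damped wave equation: the effective spatial scale is $\sqrt{G(t)+1}$ with $G(t)=\int_0^t g(s)\,ds\approx (t+1)^{\beta+1}$ (here $g$ solves $g'=bg-1$, $g(0)=b^*$, so $g\approx b^{-1}$), not the wave scale $t+1$. Concretely, the paper tests not against the constant function but against the Gaussian $e^{-|x|^2/(4(G(t)+1))}$, working with $A(t)=\int e^{-|x|^2/(4(G(t)+1))}u(t,x)\,dx$. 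H\"older against this weight gives a lower bound on the nonlinear term of order $(G(t)+1)^{-n(p-1)/2}\approx (t+1)^{-(\beta+1)}$, which is strictly stronger than your $(t+1)^{-2}$ since $\beta<1$. After a lengthy reduction (rewriting the equation via the Lin--Nishihara--Zhai divergence form, expressing $u$ through a heat Duhamel formula in the time $G(t)$, and absorbing the error terms coming from $g'$---this is precisely where the two-sided bound (B3) on $|b'|$ is used), the paper arrives at the ODE inequality $(t+1)^{\beta}W''+CW'\gtrsim W^p/(t+1)$. Your route would instead produce $(t+1)^{\beta}F''+CF'\gtrsim (t+1)^{\beta-2}F^p$, whose right-hand side decays faster; this weaker inequality is essentially what underlies the earlier non-sharp bound $T(\eps)\le\exp(C\eps^{-p})$ of Ikeda--Ogawa, not the sharp $\exp(C\eps^{-(p-1)})$.

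A smaller point: your change of time $G(t)=\int_0^t m(s)^{-1}\,ds$ with $m=\exp(\int_0^t b)$ maps to the finite interval $[0,b^*)$, which is a different object from the paper's $G(t)=\int_0^t g(s)\,ds\to\infty$; the latter is the natural heat time and is what makes the Gaussian machinery work. Also, the positivity you need is not $\dot H(0)=\eps\int u_1>0$ but rather the asymptotic lower bound $F(t)\gtrsim\eps(\int u_0+b^*\int u_1)$ for large $t$, which is indeed how the sign condition enters---but this only kicks in after a waiting time, and controlling that waiting time in $\eps$ is part of the argument.
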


\begin{remark}
\ 
\begin{enumerate}
\item As stated above, the lower estimate of the lifespan is obtained as follows (see \cite{IkOg16,FIW16a} and references therein). 
\begin{align*}
T(\eps) \geq
\begin{cases}
\exp \l( \exp (C \eps^{-(p-1)}) \r)& \text{ if } \beta=-1,
\\
\exp (C \eps^{-(p-1)}) & \text{ if } \beta\in (-1,1)\setminus\{ 0\}.
\end{cases}
\end{align*}
Thus, Theorem \ref{thm1.1} gives the sharp upper estimates of the lifespan. Moreover, this upper estimate means that small data blow-up occurs when $p=p_F$ and $\beta=-1$. 
\item The estimate in Theorem \ref{thm1.1} is same as the estimate of the lifespan for the corresponding heat equation
\begin{align*}
\l\{
\begin{array}{ll}
b(t) v_{t}-\Delta v=v^{p_F}, & (t,x) \in [0,T) \times \R^n,
\\
v(0,x)=\eps v_0(x)>0, & x \in \R^n.
\end{array}
\r.
\end{align*}
Indeed, the estimate of this equation can be obtained by reducing this to the Fujita equation by setting $v(t,x)=w(\int_{0}^{t} b(s)^{-1}ds, x)$, and using the result of  \cite{LeNi92}.
On the other hand, for \eqref{NLDW}, we cannot use such reduction since it has the twice time derivative term $u_{tt}$. 
\end{enumerate}
\end{remark}

\subsection{The idea of the proof}

We use the method by Lai and Zhou \cite{LaZh17a}. They treated the classical damping case, that is, $b(t) \equiv 1$. We explain their idea briefly. They regarded \eqref{NLDW} as the semilinear heat equation with the forcing term $u_{tt}$ and obtained 
\begin{align*}
u(t)= \eps \cG(t) \ast u_0  + \int_{0}^{t} \cG(t-s) \ast \{ |u(s)|^{p} - u_{ss}(s) \} ds,
\end{align*}
where $\cG$ is the Gaussian function. By the integration by parts of the last term in the right hand side, we get the time decay from the Gaussian function. Therefore, we can regard this equation as the semilinear heat equation for large time $t>t_{\eps}$. After $t_{\eps}$, the lifespan is estimated by $ \exp (C \eps^{-(p-1)})$ by reducing an ordinary differential inequality, which appears in the paper by Li and Zhou \cite{LiZh95}. Therefore, we obtain
\begin{align*}
T(\eps) \leq t_\eps + \exp (C \eps^{-(p-1)}). 
\end{align*}
By a direct calculation, we can prove that $t_{\eps}$ depends polynomially on $\eps$, which implies $t_{\eps} \leq  \exp (C \eps^{-(p-1)})$ for small $\eps>0$, and thus we obtain the sharp upper estimate of the lifespan. 

In the present paper, we use the idea of Lai and Zhou \cite{LaZh17a}. However, we have difficulties which come from the variable coefficient $b$. To obtain a time decay estimate from the Gaussian function, we need to rewrite \eqref{NLDW} to a divergence form. To do this, we use the transformation by Lin, Nishihara, and Zhai \cite{LNZ12}. Then, we get the divergence form
\begin{align*}
(g(t)u)_{tt} - g(t) \Delta u - (g'(t)u)_t +u_t = g(t)|u|^p,
\end{align*}
where $g$ is a positive function. According to the idea of Lai and Zhou, we regard the term $(g(t)u)_{tt}  - (g'(t)u)_t$ as a forcing term. The first term has a good time decay, which comes from the Gaussian function. However, the second term has only one time derivative and thus the time decay is worse than that of the first term. To see this term as a remainder term for large time, we need to obtain the time decay from $g'$. Moreover, since we need to know how $t_{\eps}$ depends on $\eps$, we have to get the decay order of $g'$. Once we know how $t_{\eps}$ depends on $\eps$, by reducing an ordinary differential inequality, which is different from that in \cite{LaZh17a}, we obtain the sharp upper estimate of the lifespan in Theorem \ref{thm1.1}.

{\bf Notations.} We give some notations. Let $L^p(\R^n)$ denote the usual Lebesgue space equipped with the norm 
\begin{align*}
\norm{f}_{L^p}&:=\l( \int_{\R^n} |f(x)|^p dx \r)^{1/p} \text{ if } 1\leq p<\infty,
\\
\norm{f}_{L^\infty}&:= \esssup_{x\in \R^n}|f(x)|.
\end{align*}
For $s \in \Z_{\geq0}$ and $m \geq0$, we define the weighted Sobolev space $H^{s,m}(\R^n)$ by 
\begin{align*}
H^{s,m}(\R^n)&:=\{ f \in L^2(\R^n) | \norm{f}_{H^{s,m}}<\infty \},
\\
\norm{f}_{H^{s,m}}&:= \l( \sum_{|\alpha|\leq s} \int_{\R^n} (1+|x|^2)^{m} |\partial_x^\alpha f(x)|^2 dx \r)^{1/2}.
\end{align*}
In particular, let $H^{s}(\R^n):=H^{s,0}(\R^n)$. For an interval $I \subset \R$ and a Banach space $X$, we denote the space of $k$-times continuously differentiable $X$-valued function on $I$ by $C^{k}(I:X)$. We denote the Gaussian function by $\cG$, \textit{i.e.} 
\begin{align*}
\cG(t)=\cG(t,x)=(4\pi t)^{-n/2}e^{-\frac{|x|^2}{4t}}. 
\end{align*}
We set $\cG'(t):= \partial_t \cG(t,x)$ and $\cG''(t):=\partial_t^2 \cG(t,x)$. 
The symbol $A\cleq B$ (resp. $A\cgeq B$) stands for $A\leq C B$ (resp. $A\geq C B$) with some positive constant $C$. $A \ceq B$ means that $A\cleq B$ and $A\cgeq B$ hold.


\section{Proof}

\subsection{Rewriting the equation and using a test function}

First, we rewrite the equation to a heat integral equation. We give the definition of a strong solution. 

\begin{definition}
Let $T>0$ and $(u_0,u_1)\in H^2(\R^n) \times H^1(\R^n)$. We say that $u$ is a strong solution to \eqref{NLDW} on $[0,T)$ if $u \in C^2([0,T);L^2(\R^n)) \cap C^1([0,T);H^{1}(\R^n) ) \cap C([0,T);H^{2}(\R^n))$ satisfies the initial condition $u(0)=\eps u_0$ and $u_t(0)=\eps u_1$ and satisfies the equation
\[ \partial_t^2 u - \Delta u + b(t)\partial_t u = |u|^{p},  \]
in the sense of $C([0,T):L^2(\R^n))$.
\end{definition}

To apply Lai--Zhou's method, we need to rewrite \eqref{NLDW} to a divergence form. To do this, we use Lin--Nishihara--Zhai's transformation. Let $g$ satisfy
\begin{align*}
\l\{
\begin{array}{ll}
g'(t)=b(t)g(t)-1, & t\geq 0,
\\
g(0)=b^*,
\end{array}
\r.
\end{align*}
where we recall that 
\begin{align*}
b^*=\int_{0}^{\infty} e^{-\int_{0}^{\tau} b(s)ds}d\tau.
\end{align*}
The solution $g$ of this ordinary differential equation is explicitly given by
\[ g(t)= e^{\int_{0}^{t} b(s) ds} \l( \int_{0}^{\infty} e^{-\int_{0}^{\tau} b(s)ds}d\tau- \int_{0}^{t} e^{-\int_{0}^{\tau} b(s)ds}d\tau \r). \]
Then, we get the following divergence form (see \cite{LNZ12}). 
\begin{align}
(g(t)u)_{tt} - g(t) \Delta u - (g'(t)u)_t +u_t = g(t)|u|^p,
\end{align}
Namely, we get the following lemma. 
\begin{lemma}
If $u$ is a strong solution to \eqref{NLDW} on $[0,T)$, then $u$ satisfies the equation
\[ (g(t)u)_{tt} - g(t) \Delta u - (g'(t)u)_t +u_t = g(t)|u|^p, \]
in the sense of $C([0,T):L^2(\R^n))$.
\end{lemma}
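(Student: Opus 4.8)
\emph{Proposal.} The statement is a purely algebraic identity dressed up in function-space language, so the plan is to expand the divergence-form left-hand side by the Leibniz rule and read off the right-hand side from \eqref{NLDW} using the ODE defining $g$. Concretely, I would first compute
\[
(g(t)u)_{tt} = g''(t)u + 2g'(t)u_t + g(t)u_{tt}, \qquad (g'(t)u)_t = g''(t)u + g'(t)u_t,
\]
which are legitimate since $g$ is a fixed scalar function of $t$ and $u$, being a strong solution, lies in $C^2([0,T);L^2(\R^n))$. Substituting these into $(g(t)u)_{tt} - g(t)\Delta u - (g'(t)u)_t + u_t$, the two $g''(t)u$ terms cancel and one $g'(t)u_t$ survives, so the left-hand side reduces to $g(t)\l( u_{tt} - \Delta u\r) + \l( g'(t) + 1\r) u_t$.

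The only genuinely relevant input is then the definition $g'(t) = b(t)g(t) - 1$, i.e.\ $g'(t) + 1 = b(t)g(t)$; with this the expression above becomes $g(t)\l( u_{tt} - \Delta u + b(t)u_t\r)$, and invoking the equation in \eqref{NLDW}, which holds in $C([0,T);L^2(\R^n))$ for a strong solution, gives $g(t)|u|^p$. That completes the identity.

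The remaining point is bookkeeping: one must check that the manipulations make sense in $C([0,T);L^2(\R^n))$. By (B1) we have $b \in C^2((0,\infty))$, and differentiating $g' = bg - 1$ shows $g \in C^2([0,T))$ (indeed $g'' = b'g + bg'$ is continuous), so in particular $g$, $g'$, $g''$ are continuous scalar functions, bounded on each compact subinterval of $[0,T)$. Multiplying an element of $C^k([0,T);L^2(\R^n))$ by such a scalar function keeps it in $C^k([0,T);L^2(\R^n))$ and obeys the Leibniz rule; moreover $\Delta u \in C([0,T);L^2(\R^n))$ since $u \in C([0,T);H^2(\R^n))$. Hence every term in the claimed equation is a continuous $L^2(\R^n)$-valued function and the equality holds in $C([0,T);L^2(\R^n))$. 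I do not expect any real obstacle here: the content of the lemma is entirely in the algebraic identity $g' + 1 = bg$ encoded in the choice of $g$, and the function-space part is routine.
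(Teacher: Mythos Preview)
Your proposal is correct. The paper does not give its own proof of this lemma; it simply refers to \cite{LNZ12} for the divergence form, and your direct Leibniz-rule expansion together with the identity $g'(t)+1 = b(t)g(t)$ is exactly the intended computation.
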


We collect some properties of $g$, which are used in the sequel subsections. 

\begin{lemma}
\label{lem2.2}
The function $g$ satisfies the following properties.
\begin{enumerate}
\renewcommand{\theenumi}{\roman{enumi}}
\item $\lim_{t \to \infty} b(t)g(t)=1$, \textit{i.e.}\ $\lim_{t \to \infty} g'(t)=0$.
\item There exist positive constants $m$ and $M$ such that 
\[ m b(t)^{-1} \leq g(t) \leq M b(t)^{-1} \text{ for any } t>0.\]
Namely, $g(t)\ceq (t+1)^{\beta}$.
\item We have
\begin{align*}
\frac{g'(t)}{(b(t)^{-1})'} \to 1 \text{ as } t \to \infty.
\end{align*}
In particular, $|g'(t)| \ceq (t+1)^{\beta-1}$ for sufficiently large $t>0$.
\item We define $G$ by 
\[ G(t):= \int_{0}^{t} g(s) ds.\]
Then, $G$ is increasing and for any $t>0$,
\[ G(t)+1 \thickapprox 
\begin{cases}
\log(t+1)+1 & \beta=-1,
\\
(t+1)^{\beta+1} & \beta \in (-1,1). 
\end{cases}\]
\item We define $\Gamma$ by
\[ \Gamma(t):= \int_{0}^{t} \frac{1}{g(s)}ds. \]
Then, $\Gamma$ is increasing and for any $t>0$,
\[ \Gamma(t)+1 \thickapprox (t+1)^{-\beta+1}. \]
\end{enumerate}
\end{lemma}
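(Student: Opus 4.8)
The plan is to discard the two-difference expression for $g$ and work instead with the equivalent representation
\begin{align*}
g(t)=e^{B(t)}\int_t^\infty e^{-B(\tau)}\,d\tau,\qquad B(t):=\int_0^t b(s)\,ds,
\end{align*}
which is legitimate because $\beta<1$ makes $B(t)\to\infty$ with $e^{-B}$ decaying faster than any polynomial, so that $b^*=\int_0^\infty e^{-B(\tau)}\,d\tau$ converges and equals the bracketed constant. Everything in the lemma will then come from one averaging identity. For the functional $\mathcal{L}(h)(t):=e^{B(t)}\int_t^\infty e^{-B(\tau)}h(\tau)\,d\tau$ (so that $g=\mathcal{L}(1)$), integrating by parts via $b\,e^{-B}=-\partial_\tau e^{-B}$ gives, for any $\phi$ with $e^{-B}\phi\to0$,
\begin{align*}
\mathcal{L}(b\phi)=\phi+\mathcal{L}(\phi').
\end{align*}
I will pair this with the elementary bound that if $|h(\tau)|\cleq(1+\tau)^\gamma$ with $\gamma\le0$, then $(1+\tau)^\gamma\le(1+t)^\gamma$ for $\tau\ge t$ forces $|\mathcal{L}(h)(t)|\cleq(1+t)^\gamma\,\mathcal{L}(1)(t)=(1+t)^\gamma g(t)$.

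First I would settle (ii). Choosing $\phi=1/b$ in the identity gives the exact first-order expansion
\begin{align*}
g(t)=\frac{1}{b(t)}-\mathcal{L}\Bigl(\tfrac{b'}{b^2}\Bigr)(t).
\end{align*}
By (B2)--(B3) the integrand obeys $|b'/b^2|\cleq(1+\tau)^{\beta-1}$ with $\beta-1<0$, so the bound above yields $|g(t)-b(t)^{-1}|\cleq(1+t)^{\beta-1}g(t)$; for large $t$ the right-hand side is at most $\tfrac12 g(t)$, which pins $g(t)\ceq b(t)^{-1}\ceq(1+t)^\beta$. Since $g=e^{B}\int_t^\infty e^{-B}>0$ is continuous, the equivalence extends to all $t>0$, giving (ii). Property (i) then drops out for free: $b(t)g(t)=1-b(t)\mathcal{L}(b'/b^2)(t)=1-O((1+t)^{\beta-1})\to1$, hence $g'=bg-1\to0$.

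The one genuinely delicate point is the \emph{exact} constant $1$ in (iii), and here I would iterate the identity a second time. Writing $b'/b^2=b\cdot(b'/b^3)$ and applying $\mathcal{L}(b\phi)=\phi+\mathcal{L}(\phi')$ with $\phi=b'/b^3$ gives
\begin{align*}
\mathcal{L}\Bigl(\tfrac{b'}{b^2}\Bigr)(t)=\frac{b'(t)}{b(t)^3}+\mathcal{L}\Bigl(\bigl(\tfrac{b'}{b^3}\bigr)'\Bigr)(t),
\end{align*}
and this is exactly where (B4) is needed: since $(b'/b^3)'=b''/b^3-3(b')^2/b^4$, conditions (B2)--(B4) give $|(b'/b^3)'|\cleq(1+\tau)^{2\beta-2}$, so the last term is $\cleq(1+t)^{2\beta-2}g(t)\cleq(1+t)^{3\beta-2}$, of strictly lower order than $b'/b^3\ceq(1+t)^{2\beta-1}$. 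Because $g'=-b\,\mathcal{L}(b'/b^2)$ and $(b^{-1})'=-b'/b^2$, this gives
\begin{align*}
\frac{g'(t)}{(b(t)^{-1})'}=\frac{b(t)^3\,\mathcal{L}(b'/b^2)(t)}{b'(t)}=1+O\bigl((1+t)^{\beta-1}\bigr)\to1,
\end{align*}
the division by $b'$ being legitimate because (B3) gives $|b'|\cgeq(1+t)^{-\beta-1}>0$; the companion statement $|g'|\ceq(1+t)^{\beta-1}$ follows. Finally (iv) and (v) are routine: integrating the two-sided bound of (ii), $g(s)\ceq(1+s)^\beta$ gives $G(t)\ceq\int_0^t(1+s)^\beta\,ds$, equal up to constants to $\log(t+1)$ when $\beta=-1$ and to $(1+t)^{\beta+1}$ when $\beta\in(-1,1)$, while $1/g(s)\ceq(1+s)^{-\beta}$ gives $\Gamma(t)\ceq\int_0^t(1+s)^{-\beta}\,ds\ceq(1+t)^{1-\beta}$ (no logarithm appears since $\beta\ne1$), and monotonicity of both is immediate from $g>0$. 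The main obstacle, as flagged, is that the two-sided bounds of (ii) are too crude for (iii); recovering the precise leading coefficient is what forces the second integration by parts and the second-derivative hypothesis, everything else being bookkeeping with the single estimate on $\mathcal{L}$.
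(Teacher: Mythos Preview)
Your proof is correct and takes a genuinely different route from the paper. The paper proves (i) and (iii) by writing $b(t)g(t)$ and $g'(t)/(b(t)^{-1})'$ as quotients of the form $0/0$ and applying l'H\^opital's rule directly to the explicit formula for $g$; for (iii) this means differentiating numerator and denominator of $(b(t)g(t)-1)/(-b'(t)b(t)^{-2})$ and then checking that the resulting fractions $b''/(b'b)$ and $b'/b^2$ vanish at infinity. Your approach instead treats $g=\mathcal{L}(1)$ with $\mathcal{L}(h)(t)=e^{B(t)}\int_t^\infty e^{-B}h$ and extracts asymptotics by the integration-by-parts identity $\mathcal{L}(b\phi)=\phi+\mathcal{L}(\phi')$, iterated once for (i)--(ii) and twice for (iii). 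The l'H\^opital argument is more elementary but somewhat ad hoc; your operator method is more systematic, produces quantitative remainders $O((1+t)^{\beta-1})$ rather than bare limits, and makes transparent exactly where each hypothesis enters: (B2)--(B3) suffice for the first expansion (hence for (i)--(ii)), while (B4) is needed precisely to bound $(b'/b^3)'$ in the second iteration that pins down the constant~$1$ in (iii). Parts (iv)--(v) are identical in both approaches.
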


We give the proof of this lemma in Appendix \ref{secA}.  

According to the idea of Lai and Zhou \cite{LaZh17a}, we regard the term $(g(t)u)_{tt}  - (g'(t)u)_t$ as a forcing term of a heat equation.
\begin{lemma} \label{lem2.3}
If $u$ is a strong solution to \eqref{NLDW} on $[0,T)$, then $u$ satisfies the following integral equation.
\begin{align}
\label{eq3.2}
u(t)&= \eps \cG(G(t)) \ast u_0 
\\ \notag
&\quad + \int_{0}^{t} \cG(G(t)-G(s)) \ast \{ g(s)|u(s)|^{p} - (g(s)u(s))_{ss} + (g'(s)u(s))_s \} ds.
\end{align}
\end{lemma}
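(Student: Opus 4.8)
The plan is to treat the divergence-form equation of Lemma 2.1 as an inhomogeneous heat equation in the rescaled time $\tau=G(t)$ and to read off \eqref{eq3.2} from Duhamel's principle; concretely, I would differentiate the $L^2$-valued map $s\mapsto \cG(G(t)-G(s))\ast u(s)$ and integrate.

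\textbf{Step 1 (rewrite the equation).} By Lemma 2.1 a strong solution satisfies $(g u)_{tt}-g\Delta u-(g'u)_t+u_t=g|u|^p$ in $C([0,T);L^2(\R^n))$. Rearranging gives $u_t-g(t)\Delta u=F(t)$ with $F(t):=g(t)|u(t)|^p-(g(t)u(t))_{tt}+(g'(t)u(t))_t$. From the strong-solution regularity $u\in C^2([0,T);L^2)\cap C^1([0,T);H^1)\cap C([0,T);H^2)$ together with (B1) (so $g,g',g''$ are continuous), the terms $(gu)_{tt}$ and $(g'u)_t$ lie in $C([0,T);L^2)$, and hence so does $g|u|^p$ since it equals their sum plus $u_t-g\Delta u$. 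Thus $F\in C([0,T);L^2)$.

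\textbf{Step 2 (differentiate against the heat kernel).} Fix $t\in[0,T)$ and set $\phi(s):=\cG(G(t)-G(s))\ast u(s)$ for $s\in[0,t]$, with the convention $\cG(0)\ast v=v$. Since $G'=g>0$ by Lemma 2.2, $G$ is strictly increasing, so $G(t)-G(s)>0$ for $s<t$. Using the heat-kernel identity $\cG'=\Delta\cG$, the chain rule, and the fact that convolution in $x$ commutes with $\Delta$ on $H^2$, one finds for $s<t$
\[
\phi'(s)=-g(s)\,\cG(G(t)-G(s))\ast\Delta u(s)+\cG(G(t)-G(s))\ast u_s(s)=\cG(G(t)-G(s))\ast F(s).
\]
Because $\cG(\delta)\ast{}$ is a contraction on $L^2$ converging strongly to the identity as $\delta\downarrow 0$, and $u_s,\Delta u\in C([0,T);L^2)$, the right-hand side extends continuously to $s=t$; likewise $\phi(s)\to u(t)$ as $s\uparrow t$ by the approximate-identity property and continuity of $u$ in $L^2$, while $\phi(0)=\eps\,\cG(G(t))\ast u_0$. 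The fundamental theorem of calculus then yields $u(t)-\eps\,\cG(G(t))\ast u_0=\int_0^t\cG(G(t)-G(s))\ast F(s)\,ds$, which is exactly \eqref{eq3.2}.

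\textbf{Step 3 (the main obstacle).} The one genuinely delicate point is justifying the $s$-differentiation in Step 2 uniformly as $s\uparrow t$, where $\cG(G(t)-G(s))$ concentrates: one must check that the singular factor $g(s)$ is always paired with a convolution whose $L^2$-limit exists. This is handled by splitting the difference quotient of $\phi$ into $\cG(G(t)-G(s+h))\ast\big(u(s+h)-u(s)\big)/h$ and $\big(\cG(G(t)-G(s+h))-\cG(G(t)-G(s))\big)/h\ast u(s)$, using respectively that $u\in C^1([0,T);L^2)$ with $\cG(\delta)\ast{}$ a uniform $L^2$-contraction, and that $\delta\mapsto \cG(\delta)\ast u(s)$ is $C^1$ on $[0,\infty)$ with derivative $\cG(\delta)\ast\Delta u(s)$ when $u(s)\in H^2$. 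A perhaps cleaner alternative is to make the substitution $\tau=G(t)$ rigorous — Lemma 2.2(iv) gives that $G$ is a $C^1$-diffeomorphism of $[0,T)$ onto $[0,G(T^-))$ — reducing the problem to the standard inhomogeneous heat equation $\tilde u_\tau-\Delta\tilde u=\tilde F$, $\tilde u(0)=\eps u_0$ with continuous source, for which uniqueness (energy method) and the classical Duhamel representation are available, and then changing variables $\sigma=G(s)$, $d\sigma=g(s)\,ds$ back in the Duhamel integral reproduces \eqref{eq3.2}. Everything apart from this limiting argument is routine bookkeeping.
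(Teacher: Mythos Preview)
Your proof is correct and takes a genuinely different route from the paper's. The paper works on the Fourier side: it transforms $u_t - g(t)\Delta u = F(t)$ into the ODE $\partial_t\widehat u + g(t)|\xi|^2\widehat u = \widehat F$, pairs against test functions $\varphi\in C_0^\infty$ so as to make sense of the unbounded integrating factor $e^{G(t)|\xi|^2}$, applies the fundamental theorem of calculus, and then specializes to $\varphi(\xi)=e^{-G(t)|\xi|^2}\phi(\xi)$; after Fubini--Tonelli and the inverse transform this gives \eqref{eq3.2}. You instead stay on the physical side, differentiating $s\mapsto\cG(G(t)-G(s))\ast u(s)$ directly via $\partial_\tau\cG=\Delta\cG$ and the $H^2$ regularity of $u(s)$. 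The price you pay is the endpoint analysis at $s\uparrow t$, where the kernel concentrates; your Step~3 handles this correctly --- one applies the fundamental theorem on $[0,t-\eta]$ and lets $\eta\downarrow 0$, using the approximate-identity property for $\phi(t-\eta)\to u(t)$ and dominated convergence (via the $L^2$-contractivity $\norm{\cG(\delta)\ast F(s)}_{L^2}\le\norm{F(s)}_{L^2}$) for the integral. The paper's Fourier argument sidesteps this endpoint issue since the multiplier $e^{-(G(t)-G(s))|\xi|^2}$ is smooth up to $s=t$, at the cost of the test-function bookkeeping needed for the unbounded factor. Both arguments are standard; your alternative via the time change $\tau=G(t)$ reducing to the constant-coefficient heat equation is arguably the cleanest of the three.
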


\begin{proof}
Since $u$ is the strong solution of \eqref{NLDW}, the function $F(t):=g(t)|u(t)|^{p} - (g(t)u(t))_{tt} + (g'(t)u(t))_t$ belongs to $L^{2}(\R^n)$ for any $t \in[0,T)$. 
Taking the Fourier transform of \eqref{NLDW}, we get 
\begin{align*}
\widehat{u_t}(t) + g(t) |\xi|^2 \widehat{u} (t) =\widehat{F}(t).
\end{align*}
Since we have $\partial_{t} \widehat{u} (t)=\widehat{u_t}(t)$ in $L^2(\R^n)$, we  have the ordinary differential equation 
\begin{align*}
\partial_{t} \widehat{u}(t) + g(t) |\xi|^2 \widehat{u} (t) =\widehat{F}(t).
\end{align*}
For any $\varphi \in C_{0}^{\infty}(\R^n)$, 
we obtain
\begin{align*}
\partial_{t} \tbra{ e^{G(t)|\xi|^2} \widehat{u}(t) }{\varphi}_{L^2}
& =  \tbra{ e^{G(t)|\xi|^2} \partial_{t} \widehat{u}(t)}{\varphi}_{L^2} + \tbra{ g(t)|\xi|^2 e^{G(t)|\xi|^2} \widehat{u}(t)}{\varphi}_{L^2}.
\end{align*}
Using the equation, we get 
\begin{align*}
\partial_{t} \tbra{ e^{G(t)|\xi|^2} \widehat{u}(t) }{\varphi}_{L^2}
& = \tbra{e^{G(t)|\xi|^2} \widehat{F}(t)}{\varphi}_{L^2}.
\end{align*}
Since the right hand side is continuous on $(0,T)$, we can use the fundamental theorem of calculus. 
Therefore, we obtain
\begin{align*}
\tbra{ e^{G(t)|\xi|^2} \widehat{u}(t) -  \widehat{u}(0)}{\varphi}_{L^2}= \int_{0}^{t} \tbra{e^{G(s)|\xi|^2} \widehat{F}(s)}{\varphi}_{L^2} ds
\end{align*}
for every $t\in (0,T)$ and any $\varphi \in C_{0}^{\infty}(\R^n)$. For fixed $t \in (0,T)$, taking $\varphi(\xi) = e^{-G(t)|\xi|^2} \phi (\xi)$ where $\phi \in C_{0}^{\infty}(\R^n)$ is arbitrary, we obtain
\begin{align*}
\tbra{ e^{G(t)|\xi|^2} \widehat{u}(t) -  \widehat{u}(0)}{e^{-G(t)|\xi|^2} \phi }_{L^2}= \int_{0}^{t} \tbra{e^{G(s)|\xi|^2} \widehat{F}(s)}{e^{-G(t)|\xi|^2} \phi }_{L^2} ds,
\end{align*}
for any $t \in (0,T)$. 
Thus, we get
\begin{align*}
\tbra{  \widehat{u}(t) - e^{-G(t)|\xi|^2} \widehat{u}(0)}{ \phi }_{L^2}= \int_{0}^{t} \tbra{e^{-(G(t)-G(s))|\xi|^2} \widehat{F}(s)}{\phi }_{L^2} ds,
\end{align*}
for any $t \in (0,T)$. Since we have
\begin{align*}
 \int_{0}^{t} \tbra{e^{-(G(t)-G(s))|\xi|^2} \widehat{F}(s)}{\phi }_{L^2} ds=  \tbra{  \int_{0}^{t} e^{-(G(t)-G(s))|\xi|^2} \widehat{F}(s) ds }{\phi }_{L^2},
\end{align*}
by the Fubini--Tonelli theorem, we get 
\begin{align*}
\tbra{  \widehat{u}(t) - e^{-G(t)|\xi|^2} \widehat{u}(0)}{ \phi }_{L^2}
=  \tbra{  \int_{0}^{t} e^{-(G(t)-G(s))|\xi|^2} \widehat{F}(s) ds }{\phi }_{L^2}.
\end{align*}
Since $\phi \in C_{0}^{\infty}(\R^n)$ is arbitrary, we obtain
\begin{align*}
\widehat{u}(t) - e^{-G(t)|\xi|^2} \widehat{u}(0)
=  \int_{0}^{t} e^{-(G(t)-G(s))|\xi|^2} \widehat{F}(s) ds,
\end{align*}
for almost every $\xi\in\R^n$. We note that each term belongs to $L^2(\R^n)$ for any $t \in (0,T)$. Taking the inverse Fourier transform, we obtain
\begin{align*}
u(t) - ( e^{-G(t)|\xi|^2} \widehat{u}(0))^{\vee}= \cF^{-1} \l( \int_{0}^{t} e^{-(G(t)-G(s))|\xi|^2} \widehat{F}(s) ds\r). 
\end{align*}
By the Parseval identity and the Fubini--Tonelli theorem, we get
\begin{align*}
u(t) - ( e^{-G(t)|\xi|^2} \widehat{u}(0))^{\vee}=\int_{0}^{t} ( e^{-(G(t)-G(s))|\xi|^2} \widehat{F}(s))^{\vee} ds. 
\end{align*}
Now, we have $( e^{-G(t)|\xi|^2}\widehat{f})^{\vee}(x) = \cG(G(t)) \ast f $ for $f \in L^2(\R^n)$. The proof is completed. 
\end{proof}



Choosing the Gaussian function as a test function, we get the following proposition. 

\begin{proposition} \label{prop2.4}
The energy solution $u$ to \eqref{NLDW} satisfies 
\begin{align}
\label{eq3.5}
&\int_{\R^n} e^{-\frac{|x|^2}{4(G(t)+1)}} u(t,x) dx
+g(t) \int_{\R^n} e^{-\frac{|x|^2}{4(G(t)+1)}} \partial_t u(t,x) dx
\\ \notag
&= \eps (4\pi (G(t)+1))^{n/2}  \int_{\R^n} \cG(2G(t)+1,x) (u_0+g(0)u_1)(x)  dx
\\ \notag
& \quad + (4\pi (G(t)+1))^{n/2} \int_{0}^{t} g(s) \int_{\R^n} \cG(2G(t)-G(s)+1,x)  |u(s,x)|^{p}  dx ds 
\\ \notag
& \quad - (4\pi (G(t)+1))^{n/2} \int_{0}^{t}g(s)^2  \int_{\R^n}   \cG'(2G(t)-G(s)+1,x)  \partial_su (s,x) dx ds,
\end{align}
 for any $t \in (0,T(\eps))$.
\end{proposition}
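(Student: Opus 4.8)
The plan is to test the heat-type integral representation \eqref{eq3.2} of Lemma~\ref{lem2.3} against the Gaussian
\[
e^{-\frac{|x|^2}{4(G(t)+1)}}=(4\pi(G(t)+1))^{n/2}\,\cG(G(t)+1,x),
\]
and to simplify using the semigroup property of Gaussians together with a single integration by parts in time. Since \eqref{eq3.2} is stated for strong solutions, I would first reduce to that case by a standard approximation argument: approximate $(u_0,u_1)$ in $H^1\times L^2$ by data in $H^2\times H^1$ (retaining the $L^1$ bounds), obtain the corresponding strong solutions, prove \eqref{eq3.5} for each, and pass to the limit in every term using convergence of the approximating solutions in $C^1([0,t];L^2)$ and the rapid spatial decay of the Gaussian kernels. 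The one genuinely structural observation is the algebraic collapse of the last two forcing terms in \eqref{eq3.2},
\[
-(g(s)u(s))_{ss}+(g'(s)u(s))_s=-(g(s)\,\partial_s u(s))_s,
\]
which follows by expanding $(gu)_{ss}=g''u+2g'u_s+gu_{ss}$ and $(g'u)_s=g''u+g'u_s$; after this cancellation the integrand of the time integral in \eqref{eq3.2} carries only one time derivative, which is exactly what lets a single integration by parts produce the $\partial_t u(t)$ term on the left and a $\cG'$ remainder (rather than a $\cG''$ term) on the right.

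For the spatial integrals I would use the elementary identity, valid for $a,b>0$ and $f\in L^1\cap L^2$,
\[
\int_{\R^n}\cG(a,x)\,(\cG(b)\ast f)(x)\,dx=\int_{\R^n}\cG(a+b,x)\,f(x)\,dx,
\]
which is self-adjointness of convolution against the even kernel $\cG(b,\cdot)$ combined with $\cG(a)\ast\cG(b)=\cG(a+b)$. Applied with $a=G(t)+1$ it turns $\eps\,\cG(G(t))\ast u_0$ into $\eps(4\pi(G(t)+1))^{n/2}\int\cG(2G(t)+1,x)u_0(x)\,dx$ and, under the $s$-integral, the nonlinear term into $(4\pi(G(t)+1))^{n/2}g(s)\int\cG(2G(t)-G(s)+1,x)|u(s,x)|^p\,dx$; here $2G(t)-G(s)+1\ge G(t)+1\ge 1$ on $[0,t]$ by monotonicity of $G$ (Lemma~\ref{lem2.2}), so the kernels are smooth. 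Writing $\Phi_t(s,x):=(4\pi(G(t)+1))^{n/2}\cG(2G(t)-G(s)+1,x)$, I then integrate by parts once in $s$ in $-\int_0^t\!\int_{\R^n}\Phi_t(s,x)\,(g(s)\partial_s u(s,x))_s\,dx\,ds$. The boundary term at $s=t$ equals $-g(t)\int_{\R^n}e^{-|x|^2/(4(G(t)+1))}\partial_t u(t,x)\,dx$, which I move to the left-hand side; the boundary term at $s=0$ equals $\eps\,g(0)(4\pi(G(t)+1))^{n/2}\int_{\R^n}\cG(2G(t)+1,x)u_1(x)\,dx$, which combines with the $u_0$-term into $(u_0+g(0)u_1)$, recalling $g(0)=b^*$; and since $\partial_s\cG(2G(t)-G(s)+1,x)=-g(s)\,\cG'(2G(t)-G(s)+1,x)$, the remaining integral is precisely the last ($\cG'$) term of \eqref{eq3.5}. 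Collecting the pieces yields \eqref{eq3.5}.

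The main difficulty is not the computation but its justification for an energy solution: one must verify that the integration by parts in time and the Fubini interchanges are licit. This is exactly where the regularity $u\in C^1([0,T);L^2)\cap C([0,T);H^1)$, the hypothesis $(u_0,u_1)\in L^1$, and the uniform Gaussian decay of $\Phi_t$, $\partial_s\Phi_t$ and of $e^{-|x|^2/(4(G(t)+1))}$ on $[0,t]\times\R^n$ come in --- in particular $|u(s)|^p$ only needs to be controlled against a rapidly decaying kernel, not in $L^1$ by itself --- and it is also where the reduction to strong solutions is convenient, since for strong solutions $(g(s)\partial_s u(s))_s\in L^2$ and the boundary terms are manifestly finite. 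Granting these technical points, the proposition follows from the algebraic identity above and one integration by parts.
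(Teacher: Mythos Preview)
Your proof is correct and follows essentially the same route as the paper: reduce to strong solutions by approximation (the paper's Lemma~\ref{lem2.5}), test the integral equation \eqref{eq3.2} against $(4\pi(G(t)+1))^{n/2}\cG(G(t)+1,\cdot)$, use the Gaussian semigroup identity to collapse the convolutions, exploit the cancellation $-(gu)_{ss}+(g'u)_s=-(gu_s)_s$, and integrate by parts once in $s$. The paper organizes the cancellation as a splitting $(gu)_{ss}=(g'u)_s+(gu_s)_s$ followed by cancellation against the $(g'u)_s$ term, but this is the same algebra you wrote down.
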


To prove this proposition, we use the following approximation argument. 
\begin{lemma} \label{lem2.5}
Let $(u_0,u_1) \in H^1(\R^n) \times L^2(\R^n)$ and $u$ be the energy solution to \eqref{NLDW} on $[0,T(\eps))$ with the initial data $(u_0,u_1)$. Then,  there exists a sequence $\{(u_0^k,u_1^k)\}_{k \in \N} \subset H^2(\R^n) \times H^1(\R^n)$ such that $(u_0^k,u_1^k) \to (u_0,u_1)$ in $H^1(\R^n) \times L^2(\R^n)$ as $k\to \infty$ and 
\begin{align*}
\norm{u^k-u}_{L^{\infty}(K:H^1(\R^n))} + \norm{\partial_{t} u^k- \partial_{t} u}_{L^{\infty}(K:L^2(\R^n))} \to 0 \text{ as } k \to \infty,
\end{align*}
for any compact set $K \subset [0,T(\eps))$, where $u^k$ is the strong solution to \eqref{NLDW} with the initial data $(u_0^k,u_1^k)$.
\end{lemma}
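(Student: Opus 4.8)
The plan is to prove Lemma \ref{lem2.5} by a standard density-and-continuous-dependence argument for the energy solution. First I would fix the energy solution $u$ on $[0,T(\eps))$ and a compact $K\subset [0,T(\eps))$, say $K\subset[0,T_0]$ with $T_0<T(\eps)$. Since $C_0^\infty(\R^n)$ is dense in $H^1$ and in $H^2$, and $H^2\times H^1$ is dense in $H^1\times L^2$, I pick $(u_0^k,u_1^k)\in H^2\times H^1$ with $(u_0^k,u_1^k)\to(u_0,u_1)$ in $H^1\times L^2$. Let $u^k$ denote the corresponding strong solution (existence on a maximal interval follows from the standard semigroup/fixed-point theory for the damped wave equation with locally Lipschitz nonlinearity $|u|^p$ on the finite time scale, since $p=p_F$ and we only need local theory in $H^2\times H^1$; note $|u|^p$ is only Lipschitz when $p\ge 1$, which holds, although some care with the derivative $p|u|^{p-1}$ is needed — I would invoke the usual trick that energy solutions are limits of strong solutions, a fact proved in the references on \eqref{NLDW}).

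The core step is the a priori stability estimate. I would set $w^k:=u^k-u$, which solves the linear-in-$w^k$ equation $\partial_t^2 w^k-\Delta w^k+b(t)\partial_t w^k = |u^k|^p-|u|^p$ with data $\eps(u_0^k-u_0,u_1^k-u_1)$. Using the elementary inequality $\big||a|^p-|b|^p\big|\le C(|a|^{p-1}+|b|^{p-1})|a-b|$ together with the standard energy identity for the damped wave equation (multiply by $\partial_t w^k$, integrate, use $b(t)\ge 0$ so the damping term has a favorable sign), one obtains
\begin{align*}
E(t)^{1/2}\lesssim \eps\,\|(u_0^k-u_0,u_1^k-u_1)\|_{H^1\times L^2}+\int_0^t \big\||u^k(s)|^p-|u(s)|^p\big\|_{L^2}\,ds,
\end{align*}
where $E(t):=\|\nabla w^k(t)\|_{L^2}^2+\|\partial_t w^k(t)\|_{L^2}^2+\|w^k(t)\|_{L^2}^2$. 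On the compact interval $[0,T_0]$ I can bound the source term by $\big\||u^k(s)|^p-|u(s)|^p\big\|_{L^2}\lesssim_{T_0}\|w^k(s)\|_{H^1}$, using the Sobolev embedding $H^1(\R^n)\hookrightarrow L^{2p}(\R^n)$ (valid since $2p=2p_F=2+4/n$ lies in the admissible range for $n\ge 1$) and the uniform-in-$[0,T_0]$ bound on $\|u^k\|_{C([0,T_0];H^1)}$ and $\|u\|_{C([0,T_0];H^1)}$; the former requires knowing the $u^k$ stay bounded in $H^1\times L^2$ on $[0,T_0]$, which I would get by a bootstrap/continuity argument from the same energy estimate once $k$ is large (so that the data are close), which simultaneously shows $T(\eps u_0^k,\eps u_1^k)>T_0$ for large $k$. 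Then Grönwall's inequality gives
\begin{align*}
\sup_{t\in[0,T_0]}\big(\|w^k(t)\|_{H^1}+\|\partial_t w^k(t)\|_{L^2}\big)\lesssim_{T_0} \eps\,\|(u_0^k-u_0,u_1^k-u_1)\|_{H^1\times L^2}\longrightarrow 0,
\end{align*}
which is exactly the claimed convergence, since $K\subset[0,T_0]$.

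The main obstacle I anticipate is the interplay between the nonlinearity's limited regularity and the bootstrap: one must first establish a uniform-in-$k$ a priori bound for $u^k$ on a fixed interval $[0,T_0]\Subset[0,T(\eps))$ \emph{before} the Grönwall estimate closes, and this requires a continuity argument showing $u^k$ cannot blow up before $T_0$ for large $k$ — essentially that the lifespan is lower semicontinuous in the data, together with continuous dependence. These two facts are intertwined and must be proved together via a standard (but slightly delicate) continuity-method argument: assume toward contradiction that the first time the energy of $u^k$ doubles relative to that of $u$ occurs at some $\tau_k\le T_0$, run the Grönwall estimate on $[0,\tau_k]$, and deduce for $k$ large that the energy in fact stays below the doubling threshold, contradicting the definition of $\tau_k$. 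Once this is in place, everything else is routine: the Sobolev embedding, the damped-wave energy identity, and Grönwall. I would also remark that the passage from strong to energy solutions (so that $u$ itself is such a limit, guaranteeing the approximating $u^k$ exist) is standard and can be cited.
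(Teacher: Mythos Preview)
Your approach---density of $H^2\times H^1$ data, energy estimate for the difference $w^k=u^k-u$, Sobolev embedding $H^1\hookrightarrow L^{2p}$, and Gr\"onwall---is the same route the paper takes for the continuous-dependence half of the argument. The paper organizes it slightly differently (a short-time estimate on an interval of length $T_1=T_1(\|(u_0,u_1)\|_{H^1\times L^2})$ followed by iteration, rather than a single bootstrap on all of $[0,T_0]$), but this is cosmetic.

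There is, however, one genuine gap. Your bootstrap controls only the $H^1\times L^2$ norm of $u^k$; from this you conclude ``$T(\eps u_0^k,\eps u_1^k)>T_0$'', meaning the \emph{energy} solution $u^k$ persists past $T_0$. But the lemma asserts that $u^k$ is a \emph{strong} solution on $K\subset[0,T_0]$, i.e.\ $(u^k,\partial_t u^k)\in C([0,T_0];H^2\times H^1)$, and a bound on the $H^1\times L^2$ norm does not by itself prevent the $H^2\times H^1$ norm from blowing up before $T_0$. The paper closes this with a separate \emph{persistency of regularity} step: assuming the strong lifespan $T_s$ were strictly less than the energy lifespan, one combines the $H^2\times H^1$-level energy inequality with the estimate $\||u|^p\|_{H^1}\cleq \|u\|_{H^1}^{p-1}\|u\|_{H^2}$ (valid since $p_F\le n/(n-2)$ and $n(p_F-1)\le 2n/(n-2)$) to show $\|(u,u_t)\|_{H^2\times H^1}$ remains bounded up to $T_s$, a contradiction. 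Without this step you have not established that the approximants $u^k$ are strong solutions on all of $K$, which is exactly what the lemma---and its subsequent use to transfer the heat-kernel identity from strong to energy solutions---requires. Your closing remark that ``the passage from strong to energy solutions \dots\ is standard and can be cited'' is circular here, since that passage is precisely the content of the lemma.
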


\begin{proof}
It is enough to prove persistency of the regularity and continuous dependence on initial data since the statement can be obtained by combining them. 
First, we show persistency. Namely, the lifespan of the strong solution $T_s$ equals to the lifespan $T$ of the energy solution if $(u_0,u_1) \in H^2(\R^n) \times H^1(\R^n)$. It is easy to show $T_s \leq T$ so that we prove $T_s \geq T$ by a contradiction argument. We suppose that $T_s < T$. Then, we have $T_s<\infty$ and thus \begin{align}
\label{eq2.4}
\lim_{t \to T_s} \norm{ ( u(t), u_t (t)) }_{H^2\times H^1}=\infty. 
\end{align}
Since $T_s < T$, we have 
\begin{align*}
M:= \sup_{0 < t < T_s} \norm{ ( u(t), u_t (t)) }_{H^1 \times L^2}<\infty.
\end{align*}
Take $t_0 \in (0,T_s)$ such that $T_s -t_0 <(2C_0 M^{p-1})^{-1}$. Then, by the energy estimate, we have
\begin{align}
\label{eq2.5}
\sup_{t_0 \leq t \leq \tau}  \norm{ ( u(t), u_t (t)) }_{H^2\times H^1} 
\cleq   \norm{ ( u(t_0), u_t (t_0)) }_{H^2\times H^1} 
+ \int_{t_0}^{\tau} \norm{ |u(t)|^{p} }_{H^1} dt,
\end{align}
for any $\tau \in (t_0, T_s)$. 
Now, since $p\leq n/(n-2)$, by the Sobolev inequality, we obtain
\begin{align*}
\norm{ |u(t)|^{p} }_{L^2}
= \norm{ u(t) }_{L^{2p}}^{p}
\leq \norm{u(t)}_{H^1}^{p}.
\end{align*}
Moreover, by the H\"{o}lder inequality and the Sobolev inequality, we also obtain
\begin{align*}
\norm{ |u(t)|^{p} }_{\dot{H}^1}
& \cleq \norm{ |u(t)|^{p-1} |\nabla u(t)| }_{L^2}
\\
& \leq  \norm{ |u(t)|^{p-1} }_{L^n} \norm{\nabla u(t) }_{L^{\frac{2n}{n-2}}}
\\
& \cleq  \norm{ u(t) }_{L^{n(p-1)}}^{p-1} \norm{u(t) }_{H^2}
\\
& \cleq  \norm{ u(t) }_{H^1}^{p-1} \norm{u(t) }_{H^2},
\end{align*}
where we use $n(p-1) \leq 2n/(n-2)$. Thus, combining them with \eqref{eq2.5}, we get
\begin{align*}
\sup_{t_0 \leq t \leq \tau}  \norm{ ( u(t), u_t (t)) }_{H^2\times H^1} 
& \leq  C \norm{ ( u(t_0), u_t (t_0)) }_{H^2\times H^1} 
\\
& \quad +C_0 (\tau-t_0) \sup_{t_0 \leq t \leq \tau} \l( \norm{ u(t) }_{H^1}^{p-1} \norm{u(t) }_{H^2}\r)
\\
& \leq C \norm{ ( u(t_0), u_t (t_0)) }_{H^2\times H^1} 
\\
& \quad +C_0 M^{p-1} (T_s-t_0) \sup_{t_0 < t < \tau} \norm{ ( u(t), u_t (t)) }_{H^2\times H^1},
\end{align*}
for any $\tau \in (t_0, T_s)$. By the definition of $t_0$, we have
\begin{align*}
\sup_{t_0 \leq t \leq \tau}  \norm{ ( u(t), u_t (t)) }_{H^2\times H^1} 
 \leq C \norm{ ( u(t_0), u_t (t_0)) }_{H^2\times H^1},
\end{align*}
for any $\tau \in (t_0, T_s)$. Taking the limit $\tau \to T_s$, this contradicts \eqref{eq2.4}. 
Next, we show continuous dependence on initial data. Let $\{(u_0^k,u_1^k)\}_{k\in \N} \subset H^1(\R^n) \times L^2(\R^n)$ satisfy $(u_0^k,u_1^k) \to (u_0,u_1)$ in $H^1(\R^n) \times L^2(\R^n)$ as $k\to \infty$. We have $\norm{(u_0^k,u_1^k)}_{H^1 \times L^2} \leq 2\norm{(u_0,u_1)}_{H^1 \times L^2}$ for large $k$. 
By the energy estimate, there exists $T_1=T_1(\norm{(u_0,u_1)}_{H^1 \times L^2} )$ such that
\begin{align*}
\sup_{0\leq t \leq T_1}\norm{(u(t),u_t(t))}_{H^1 \times L^2} + \sup_{k\geq K} \sup_{0\leq t \leq T_1}\norm{(u^k(t),u_t^k(t))}_{H^1 \times L^2} 
\cleq \norm{(u_0,u_1)}_{H^1 \times L^2}.
\end{align*}
This and the energy estimate give 
\begin{align*}
&\norm{(u(t),u_t(t))-(u^k(t),u_t^k(t))}_{H^1 \times L^2}
\\
&\quad \cleq  \norm{(u_0,u_1)-(u_0^k,u_1^k)}_{H^1 \times L^2} + \int_{0}^{T_1}  \norm{(u(t),u_t(t))-(u^k(t),u_t^k(t))}_{H^1 \times L^2} dt.
\end{align*}
By the Gronwall inequality, we obtain 
\begin{align*}
\sup_{0\leq t \leq T_1}\norm{(u(t),u_t(t))-(u^k(t),u_t^k(t))}_{H^1 \times L^2} \to 0 
\end{align*}
as $k \to \infty$. Iterating this to cover any compact subset of $(0, T)$, we finish the proof.
\end{proof}

\begin{proof}[Proof of Proposition \ref{prop2.4}] 
It is enough to show that the strong solution satisfies \eqref{eq3.5} since we find that the energy solution satisfies \eqref{eq3.5} for any $t \in (0,T(\eps))$ by Lemma \ref{lem2.5} and the approximation argument. Let $u$ be the strong solution to \eqref{NLDW}. By Lemma \ref{lem2.3}, the strong solution satisfies \eqref{eq3.2}. Multiplying \eqref{eq3.2} by $(4\pi (G(t)+1))^{n/2} \cG(G(t)+1,x)$ and integrating it on $\R^n$, we get
\begin{align}
\label{eq2.6}
&\int_{\R^n} e^{-\frac{|x|^2}{4(G(t)+1)}} u(t,x) dx
\\ \notag
&= \eps (4\pi (G(t)+1))^{n/2} \int_{\R^n}  \cG(G(t)+1,x)( \cG(G(t)) \ast u_0 )(x) dx
\\ \notag
&\quad +(4\pi (G(t)+1))^{n/2}  \int_{\R^n}  \cG(G(t)+1,x) \int_{0}^{t}  \cG(G(t)-G(s)) \ast g(s)|u(s)|^{p}  ds dx
\\ \notag
&\quad -(4\pi (G(t)+1))^{n/2}  \int_{\R^n}  \cG(G(t)+1,x) \int_{0}^{t}  \cG(G(t)-G(s)) \ast   (g(s)u(s))_{ss}  ds dx
\\ \notag
&\quad +(4\pi (G(t)+1))^{n/2}  \int_{\R^n}  \cG(G(t)+1,x) \int_{0}^{t}  \cG(G(t)-G(s)) \ast  (g'(s)u(s))_s ds dx.
\end{align}
By the semigroup property of the Gaussian, \textit{i.e.} $\cG(s) \ast \cG(t) = \cG(s+t)$ for any $s,t> 0$, and the Fubini--Tonelli theroem, we obtain
\begin{align*}
& \int_{\R^n}  \cG(G(t)+1,x) \cG(G(t)) \ast u_0(x)  dx
 =  \int_{\R^n}   \cG(2G(t)+1,x)  u_0(x)   dx,
\end{align*}
and also get
\begin{align*}
& \int_{\R^n} \cG(G(t)+1,x)  \{\cG(G(t)-G(s))\ast |u(s)|^{p} \}(x) dx
\\
& = \int_{\R^n} \cG(2G(t)-G(s)+1,x)  |u(s,x)|^{p}  dx.
\end{align*}
Morover, we have
\begin{align*}
& \int_{\R^n}  \cG(G(t)+1,x) \int_{0}^{t}  \cG(G(t)-G(s)) \ast   (g(s)u(s))_{ss}  ds dx
\\
&=\int_{\R^n}  \cG(G(t)+1,x) \int_{0}^{t}  \cG(G(t)-G(s)) \ast   (g'(s)u(s))_{s}  ds dx
\\
&\quad + \int_{\R^n}  \cG(G(t)+1,x) \int_{0}^{t}  \cG(G(t)-G(s)) \ast   (g(s)u_s(s))_{s}  ds dx,
\end{align*}
and
\begin{align*}
&\int_{\R^n}  \cG(G(t)+1,x) \int_{0}^{t}  \cG(G(t)-G(s)) \ast   (g(s)u_s(s))_{s}  ds dx
\\
&= \int_{0}^{t}  \int_{\R^n}  \cG(2G(t)-G(s)+1,x) (g(s)u_s(s))_s(x)dx ds
\\
&=  g(t) \int_{\R^n}  \cG(G(t)+1,x) u_t(t,x)dx - \eps g(0) \int_{\R^n}  \cG(2G(t)+1,x) u_1(x)dx 
\\
&\quad + \int_{0}^{t}   \int_{\R^n} g(s)^2 \cG'(2G(t)-G(s)+1,x) u_s(s,x)dx ds.
\end{align*}
Therefore, \eqref{eq2.6} implies \eqref{eq3.5} for the strong solution. 
\end{proof}


\subsection{Estimate of each term}

To obtain an ordinary differential inequality, we give estimates for each term of \eqref{eq3.5}. 


We set 
\begin{align*}
A(t) & := \int_{\R^n} e^{-\frac{|x|^2}{4(G(t)+1)}} u(t,x) dx,
\\
B(t) & := g(t) \int_{\R^n} e^{-\frac{|x|^2}{4(G(t)+1)}} \partial_t u(t,x) dx,
\\
C(t) & := \eps (4\pi (G(t)+1))^{n/2}  \int_{\R^n} \cG(2G(t)+1,x) (u_0+g(0)u_1)(x)  dx,
\\
D(t) & :=  (4\pi (G(t)+1))^{n/2} \int_{0}^{t} g(s) \int_{\R^n} \cG(2G(t)-G(s)+1,x)  |u(s,x)|^{p}  dx ds,
\\
E(t) & := - (4\pi (G(t)+1))^{n/2} \int_{0}^{t}g(s)^2  \int_{\R^n}   \cG'(2G(t)-G(s)+1,x)  \partial_su (s,x) dx ds.
\end{align*}
Namely, \eqref{eq3.5} is $A+B=C+D+E$. 

\subsubsection{Estimate of $A$} 

First, we estimate $A$. Let $p'$ denote the H\"{o}lder conjugate of $p$ \textit{i.e.}\ $p'=p/(p-1)$. By the H\"{o}lder inequality, we obtain
\begin{align*}
A(t) &= \int_{\R^n} e^{-\frac{|x|^2}{4(G(t)+1)}} u(t,x) dx
\\
& \leq  \int_{\R^n} e^{-\frac{|x|^2}{4(G(t)+1)}} |u(t,x)| dx 
\\
& \leq \l( \int_{\R^n} e^{-\frac{|x|^2}{4(G(t)+1)}} dx \r)^{1/p'} \l(\int_{\R^n} e^{-\frac{|x|^2}{4(G(t)+1)}} |u(t,x)|^{p} dx\r)^{1/p} 
\\
& = ( 4\pi (G(t)+1) )^{\frac{n}{2p'}} \l(\int_{\R^n} e^{-\frac{|x|^2}{4(G(t)+1)}} |u(t,x)|^{p} dx\r)^{1/p}.
\end{align*}

We denote the right hand side by $F$, \textit{i.e.}\ 
\begin{align*}
F(t):= ( 4\pi (G(t)+1) )^{\frac{n}{2p'}} \l(\int_{\R^n} e^{-\frac{|x|^2}{4(G(t)+1)}} |u(t,x)|^{p} dx\r)^{1/p}.
\end{align*}


\subsubsection{Estimate of $B$} 

Secondly, we estimate $B$. By the Leibnitz rule, we have
\begin{align*}
B(t) & = g(t) \int_{\R^n} e^{-\frac{|x|^2}{4(G(t)+1)}} \partial_t u(t,x) dx
\\
& = g(t)A'(t) - \frac{ g(t)^2}{G(t)+1} \int_{\R^n} \frac{ |x|^2}{4(G(t)+1)}  e^{-\frac{|x|^2}{4(G(t)+1)}} u(t,x) dx.
\end{align*}
Here, by the H\"{o}lder inequality, we have
\begin{align*}
&\l| \frac{ g(t)^2}{G(t)+1} \int_{\R^n} \frac{ |x|^2}{4(G(t)+1)}  e^{-\frac{|x|^2}{4(G(t)+1)}} u(t,x) dx \r|
\\
&\leq \frac{ g(t)^2}{G(t)+1} \int_{\R^n} \frac{ |x|^2}{4(G(t)+1)}  e^{-\frac{|x|^2}{4(G(t)+1)}} |u(t,x)| dx
\\
&\leq  \frac{ g(t)^2}{G(t)+1} \l( \int_{\R^n} \l(\frac{ |x|^2}{4(G(t)+1)} \r)^{p'} e^{-\frac{|x|^2}{4(G(t)+1)}}  dx\r) ^{1/p'}\l(\int_{\R^n} e^{-\frac{|x|^2}{4(G(t)+1)}} |u(t,x)|^{p} dx\r)^{1/p}
\\
&\ceq  \frac{g(t)^2}{G(t)+1} F(t).
\end{align*}
Therefore, we get 
\begin{align*}
B(t) \leq g(t)A'(t) +C_{n,p} \frac{g(t)^2}{G(t)+1} F(t).
\end{align*}


\subsubsection{Estimate of $D$} 

Thirdly, we estimate $D$. Since $0\leq G(s) < G(t)$ for $t>s\geq 0$, we have 
\begin{align*}
0 \leq G(s)+1\leq  2G(t)-G(s)+1 \leq 2G(t)+1 \leq 2(G(t)+1).
\end{align*}
Therefore, we get
\begin{align*}
D(t) & =  (4\pi (G(t)+1))^{n/2} \int_{0}^{t} g(s) \int_{\R^n} \cG(2G(t)-G(s)+1,x)  |u(s,x)|^{p}  dx ds
\\
&= \int_{0}^{t}  \l(\frac{G(t)+1}{2G(t)-G(s)+1}\r)^{n/2}g(s) \int_{\R^n} e^{-\frac{|x|^2}{4(2G(t)-G(s)+1)}}  |u(s,x)|^{p}  dx ds
\\
&\geq 2^{-\frac{n}{2}}  \int_{0}^{t}  g(s) \int_{\R^n} e^{-\frac{|x|^2}{4(G(s)+1)}}  |u(s,x)|^{p}  dx ds
\\
&= 2^{-\frac{n}{2}}  \int_{0}^{t}  g(s) ( 4\pi (G(s)+1) )^{-\frac{np}{2p'}} F(s)^p ds
\\
&= 2^{-\frac{n}{2}} ( 4\pi  )^{-\frac{np}{2p'}} \int_{0}^{t} \frac{g(s)F(s)^p}{(G(s)+1) ^{\frac{n}{2}(p-1)}}  ds,
\end{align*}
where we note that $p/p'=p-1$.


\subsubsection{Estimate of $E$} 
We give an estimate of $E$. 
By the integration by parts, we obtain
\begin{align*}
E(t) 
&= - (4\pi (G(t)+1))^{n/2}  \int_{\R^n} [g(s)^2   \cG'(2G(t)-G(s)+1,x)  u (s,x)]_{s=0}^{s=t}dx
\\
&\quad +(4\pi (G(t)+1))^{n/2}  \int_{\R^n} \int_{0}^{t} \partial_s \{ g(s)^2   \cG'(2G(t)-G(s)+1,x) \} u (s,x) dsdx
\\
&= - (4\pi (G(t)+1))^{n/2}  \int_{\R^n} g(t)^2   \cG'(G(t)+1,x)  u (t,x)dx
\\
&\quad + \eps (4\pi (G(t)+1))^{n/2}  \int_{\R^n} g(0)^2   \cG'(2G(t)+1,x)  u_0 (x)dx
\\
&\quad +(4\pi (G(t)+1))^{n/2}  \int_{\R^n} \int_{0}^{t} 2 g(s)g'(s)  \cG'(2G(t)-G(s)+1,x)  u (s,x) dsdx
\\
&\quad -(4\pi (G(t)+1))^{n/2}  \int_{\R^n} \int_{0}^{t} g(s)^3  \cG''(2G(t)-G(s)+1,x)  u (s,x) dsdx
\\
&=:E_1(t)+E_2(t)+E_3(t)+E_4(t).
\end{align*}
By a simple calculation, we have
\begin{align*}
\cG'(t)
&=\l(-\frac{n}{2t}+\frac{|x|^2}{4t^2}\r)\cG(t).
\end{align*}
The first term is estimated as follows. 
\begin{align*}
|E_1(t)|
&\leq (4\pi (G(t)+1))^{n/2}   \frac{ng(t)^2}{2(G(t)+1)}  \int_{\R^n} \cG(G(t)+1,x) |u (t,x)|dx
\\
& \quad +(4\pi (G(t)+1))^{n/2}  \frac{g(t)^2}{G(t)+1} \int_{\R^n}   \frac{|x|^2}{4(G(t)+1)}  \cG(G(t)+1,x) |u (t,x)|dx
\\
&\leq   \frac{ng(t)^2}{2(G(t)+1)}  \int_{\R^n} e^{-\frac{|x|^2}{4(G(t)+1)}} |u (t,x)|dx
\\
& \quad +  \frac{g(t)^2}{G(t)+1} \int_{\R^n}   \frac{|x|^2}{4(G(t)+1)} e^{-\frac{|x|^2}{4(G(t)+1)}} |u (t,x)|dx.
\end{align*}
In the same way as the estimates of $A$ and $B$, we obtain
\begin{align*}
|E_1(t)|
&\leq C_{n,p} \frac{g(t)^2}{G(t)+1} F(t).
\end{align*}

The second term $E_2$ is related to the initial data. The estimate of $E_2$ is considered in Section \ref{sec2.3}. Now, we only note that the following equality holds. 
\begin{align*}
E_2(t)
&=-\eps \l( \frac{G(t)+1}{2G(t)+1}\r)^{n/2}  \frac{n g(0)^2}{2(2G(t)+1)}  \int_{\R^n}   e^{-\frac{|x|^2}{4(2G(t)+1)}} u_0 (x)dx
\\
&\quad +\eps \l( \frac{G(t)+1}{2G(t)+1}\r)^{n/2}  \frac{g(0)^2 }{2G(t)+1} \int_{\R^n}  \frac{|x|^2}{4(2G(t)+1)} e^{-\frac{|x|^2}{4(2G(t)+1)}} u_0 (x)dx.
\end{align*}

The third term $E_3$ is estimated as follows. Since $2G(t)-G(s)+1 > G(t)+1$ for $t > s\geq0$, we have
\begin{align*}
&|E_3(t)|
\\
& \leq \frac{n(4\pi)^{n/2}}{2} ( G(t)+1)^{n/2-1}  \int_{\R^n} \int_{0}^{t}  g(s)|g'(s)|  \cG(2G(t)-G(s)+1,x)  |u (s,x)| dsdx 
\\
&\quad +2(4\pi)^{n/2} (G(t)+1)^{n/2-1}  \int_{\R^n} \int_{0}^{t} g(s)|g'(s)| \frac{|x|^2 \cG(2G(t)-G(s)+1,x)  }{4(2G(t)-G(s)+1)} |u (s,x)| dsdx
\\
&=:E_{31}(t) + E_{32}(t).
\end{align*}
By the H\"{o}lder inequality and the Young inequality, we obtain
\begin{align*}
&E_{31}(t)
\\
&\ceq ( G(t)+1)^{n/2-1}  \int_{\R^n} \int_{0}^{t}  g(s)|g'(s)| \cG(2G(t)-G(s)+1,x)  |u (s,x)| dsdx 
\\
&\leq ( G(t)+1)^{n/2-1} \l( \int_{\R^n} \int_{0}^{t}  g(s)|g'(s)|^{p'} \cG(2G(t)-G(s)+1,x)  dsdx\r)^{1/p'}
\\
& \quad \times  \l(\int_{\R^n} \int_{0}^{t}  g(s)\cG(2G(t)-G(s)+1,x)  |u (s,x)|^p dsdx \r)^{1/p}
\\
&\ceq ( G(t)+1)^{\frac{n}{2p'}-1} \l( \int_{0}^{t}  g(s)|g'(s)|^{p'}  ds\r)^{1/p'} D(t)^{1/p}
\\
&\leq \frac{1}{8}D(t) + C_{n,p} ( G(t)+1)^{\frac{n}{2}-p'} \l( \int_{0}^{t}  g(s)|g'(s)|^{p'}  ds\r).
\end{align*}
In the same manner, we get
\begin{align*}
& E_{32}(t)
\\
& \ceq  (G(t)+1)^{n/2-1}  \int_{\R^n} \int_{0}^{t} g(s)|g'(s)| \frac{|x|^2 \cG(2G(t)-G(s)+1,x)  }{4(2G(t)-G(s)+1)} |u (s,x)| dsdx
\\
& \leq  (G(t)+1)^{n/2-1}  \l(\int_{\R^n} \int_{0}^{t}  g(s)\cG(2G(t)-G(s)+1,x)  |u (s,x)|^p dsdx \r)^{1/p}
\\
& \quad  \times  \l( \int_{\R^n} \int_{0}^{t} g(s)|g'(s)|^{p'} \l( \frac{|x|^2}{4(2G(t)-G(s)+1)} \r)^{p'}\cG(2G(t)-G(s)+1,x) dsdx\r)^{1/p'}
\\
& \ceq  (G(t)+1)^{\frac{n}{2p'}-1}  D(t)^{1/p} \l( \int_{0}^{t} g(s)|g'(s)|^{p'}ds\r)^{1/p'}
\\
& \leq  \frac{1}{8}D(t) + C_{n,p} (G(t)+1)^{\frac{n}{2}-p'}  \l( \int_{0}^{t} g(s)|g'(s)|^{p'}ds\r).
\end{align*}
Therefore, we conclude that
\begin{align*}
|E_3(t)| \leq  \frac{1}{4} D(t) +C_{n,p} (G(t)+1)^{\frac{n}{2}-p'}  \l( \int_{0}^{t} g(s)|g'(s)|^{p'}ds\r).
\end{align*}
At last, we consider $E_4$. Now, an easy calculation gives us 
\begin{align*}
\cG''(t)
& =\l(\frac{2n+n^2}{4t^2}-\frac{(n+2)|x|^2}{4t^3}+\frac{|x|^4}{16t^4}\r)\cG(t).
\end{align*}
Therefore, we have
\begin{align*}
E_4(t)
&=-(4\pi (G(t)+1))^{n/2}  \int_{\R^n} \int_{0}^{t} g(s)^3 \frac{(2n+n^2)\cG(2G(t)-G(s)+1,x) }{4(2G(t)-G(s)+1)^2}  u (s,x) dsdx
\\
&\quad +(4\pi (G(t)+1))^{n/2}  \int_{\R^n} \int_{0}^{t} g(s)^3 \frac{(n+2)|x|^2\cG(2G(t)-G(s)+1,x) }{4(2G(t)-G(s)+1)^3}  u (s,x) dsdx
\\
&\quad -(4\pi (G(t)+1))^{n/2}  \int_{\R^n} \int_{0}^{t} g(s)^3 \frac{|x|^4\cG(2G(t)-G(s)+1,x) }{16(2G(t)-G(s)+1)^4}  u (s,x) dsdx.
\end{align*}
Since $2G(t)-G(s)+1 > G(t)+1$ for $t > s\geq0$, we have
\begin{align*}
|E_4(t)|
&\cleq  (G(t)+1)^{n/2-2}  \int_{\R^n} \int_{0}^{t} g(s)^3 \cG(2G(t)-G(s)+1,x) |u (s,x)| dsdx
\\
& \quad + (G(t)+1)^{n/2-2}  \int_{\R^n} \int_{0}^{t} g(s)^3 \frac{|x|^2\cG(2G(t)-G(s)+1,x) }{4(2G(t)-G(s)+1)}  |u (s,x) |dsdx
\\
&\quad +(G(t)+1)^{n/2-2}  \int_{\R^n} \int_{0}^{t} g(s)^3 \frac{|x|^4\cG(2G(t)-G(s)+1,x) }{16(2G(t)-G(s)+1)^2}  |u (s,x)| dsdx
\\
&=: E_{41}(t)+E_{42}(t)+E_{43}(t).
\end{align*}
We estimate $E_{41}$. 
By the H\"{o}lder inequality and the Young inequality, we get
\begin{align*}
E_{41}(t)&\ceq (G(t)+1)^{n/2-2}  \int_{\R^n} \int_{0}^{t} g(s)^3 \cG(2G(t)-G(s)+1,x) |u (s,x)| dsdx
\\
&\leq  (G(t)+1)^{n/2-2} \l( \int_{\R^n} \int_{0}^{t} g(s)^{2p'+1} \cG(2G(t)-G(s)+1,x)  dsdx\r)^{1/p'}
\\
& \quad \times  \l(\int_{\R^n} \int_{0}^{t}  g(s)\cG(2G(t)-G(s)+1,x)  |u (s,x)|^p dsdx \r)^{1/p}
\\
& \ceq  ( G(t)+1)^{\frac{n}{2p'}-2} \l( \int_{0}^{t}  g(s)^{2p'+1} ds\r)^{1/p'} D(t)^{1/p}
\\
& \leq  \frac{1}{12} D(t) +C_{n,p} ( G(t)+1)^{\frac{n}{2}-2p'} \l( \int_{0}^{t}  g(s)^{2p'+1} ds\r).
\end{align*}
 By the same way, $E_{42}$ and $E_{43}$ can be estimated as follows. 
\begin{align*}
E_{42}(t),E_{43}(t) \leq  \frac{1}{12} D(t) +C_{n,p} ( G(t)+1)^{\frac{n}{2}-2p'} \l( \int_{0}^{t}  g(s)^{2p'+1} ds\r).
\end{align*}
By combining these estimates, we obtain
\begin{align*}
|E_{4}(t)| \leq  \frac{1}{4} D(t) +C_{n,p} ( G(t)+1)^{\frac{n}{2}-2p'} \l( \int_{0}^{t}  g(s)^{2p'+1} ds\r).
\end{align*}

\subsection{Reducing to an ordinary differential inequality}
\label{sec2.3}


We apply the above estimates to \eqref{eq3.5} to obtain an ordinary differential inequality. 

From \eqref{eq3.5}, by the estimate of $B$, 
we obtain
\begin{align*}
A(t)+g(t)A'(t)+C_{1} \frac{g(t)^2 F(t)}{G(t)+1} -E(t) \geq C(t) +D(t).
\end{align*}
By the estimate of $E$, we get
\begin{align*}
&A(t)+g(t)A'(t)+C_{2} \frac{g(t)^2 F(t)}{G(t)+1} 
\\
&+C_{3} ( G(t)+1)^{\frac{n}{2}-2p'} \l( \int_{0}^{t}  g(s)^{2p'+1} ds\r) 
\\
&+C_{4} (G(t)+1)^{\frac{n}{2}-p'} \l( \int_{0}^{t} g(s)|g'(s)|^{p'}ds\r)
\\
& \geq C(t)-E_2(t) +\frac{1}{2}D(t).
\end{align*}
Applying the estimate of $D$, we get, for any $t>0$,
\begin{align}
\label{eq3.6}
A(t)+g(t)A'(t)+C_{2} \frac{g(t)^2 F(t)}{G(t)+1}  
\geq 
C_5 \int_{0}^{t} \frac{g(s)F(s)^p}{(G(s)+1) ^{\frac{n}{2}(p-1)}}  ds +\eps H(t) -I(t),
\end{align}
where we set
\begin{align*}
H(t)&:= \eps^{-1}(C(t)-E_2(t) ),
\\
I(t)&:=C_{3} ( G(t)+1)^{\frac{n}{2}-2p'} \l( \int_{0}^{t}  g(s)^{2p'+1} ds\r) 
\\
&\quad +C_{4} (G(t)+1)^{\frac{n}{2}-p'} \l( \int_{0}^{t} g(s)|g'(s)|^{p'}ds\r).
\end{align*}
Multiplying $g(t)^{-1}e^{\Gamma(t)}=(e^{\Gamma(t)})'$, we get
\begin{align*}
&(e^{\Gamma(t)}A(t))'
+C_{2} \frac{g(t) e^{\Gamma(t)} F(t)}{G(t)+1}  
\\
& \geq C_5 (e^{\Gamma(t)})' \int_{0}^{t} \frac{g(s)F(s)^p}{(G(s)+1) ^{\frac{n}{2}(p-1)}}  ds 
+(e^{\Gamma(t)})'( \eps H(t) -I(t)).
\end{align*}
Integrating this on $[0,t]$, we obtain
\begin{align*}
&e^{\Gamma(t)}A(t)-A(0)+C_{2} \int_{0}^{t} \frac{g(\tau) e^{\Gamma(\tau)} F(\tau)}{G(\tau)+1}   d\tau
\\
&\geq 
C_5 \int_{0}^{t}  (e^{\Gamma(\tau)})' \int_{0}^{\tau} \frac{g(s)F(s)^p}{(G(s)+1) ^{\frac{n}{2}(p-1)}}  ds d\tau
+ \int_{0}^{t} (e^{\Gamma(\tau)})'( \eps H(\tau) -I(\tau))d\tau.
\end{align*}
Since we have the identity
\begin{align*}
\int_{0}^{t}  (e^{\Gamma(\tau)})' \int_{0}^{\tau} \frac{g(s)F(s)^p}{(G(s)+1) ^{\frac{n}{2}(p-1)}}  ds d\tau
= \int_{0}^{t} \frac{ (e^{\Gamma(t)}- e^{\Gamma(\tau)}) g(\tau)F(\tau)^p}{(G(\tau)+1) ^{\frac{n}{2}(p-1)}}  d\tau,
\end{align*}
and the estimate of $A$, we obtain
\begin{align*}
&e^{\Gamma(t)}F(t)+C_{2} \int_{0}^{t} \frac{g(\tau) e^{\Gamma(\tau)} F(\tau)}{G(\tau)+1}   d\tau
\\
&\geq 
C_5 \int_{0}^{t} \frac{ (e^{\Gamma(t)}- e^{\Gamma(\tau)}) g(\tau)F(\tau)^p}{(G(\tau)+1) ^{\frac{n}{2}(p-1)}}  d\tau
+ \int_{0}^{t} (e^{\Gamma(\tau)})'( \eps H(\tau) -I(\tau))d\tau+A(0).
\end{align*}
Multiplying $g^{-1}$ and integrating on $[0,t]$, we obtain
\begin{align}
\label{eq3.7}
& \int_{0}^{t} \frac{1}{g(s)} e^{\Gamma(s)}F(s) ds +C_{2}\int_{0}^{t} \frac{1}{g(s)}  \int_{0}^{s} \frac{g(\tau) e^{\Gamma(\tau)} F(\tau)}{G(\tau)+1}   d\tau ds
\\ \notag
&\geq 
C_5 \int_{0}^{t} \frac{1}{g(s)}  \int_{0}^{s} \frac{ (e^{\Gamma(s)}- e^{\Gamma(\tau)}) g(\tau)F(\tau)^p}{(G(\tau)+1) ^{\frac{n}{2}(p-1)}}  d\tau ds
\\ \notag
&+ \int_{0}^{t} \frac{1}{g(s)} \l\{ \int_{0}^{s} (e^{\Gamma(\tau)})'( \eps H(\tau) -I(\tau))d\tau+A(0)\r\} ds.
\end{align}
We consider the left hand side. 
\begin{align}
\label{eq3.8}
&\int_{0}^{t} \frac{1}{g(s)} e^{\Gamma(s)}F(s) ds +C_{2}\int_{0}^{t} \frac{1}{g(s)}  \int_{0}^{s} \frac{g(\tau) e^{\Gamma(\tau)} F(\tau)}{G(\tau)+1}   d\tau ds
\\ \notag
&\cleq \int_{0}^{t} \l\{ \frac{1}{g(s)} e^{\Gamma(s)}F(s) + \frac{1}{g(s)}  \int_{0}^{s} \frac{g(\tau) e^{\Gamma(\tau)} F(\tau)}{G(\tau)+1}  d\tau \r\}  ds.
\end{align}
Letting $\chi_\beta =1$ if $\beta=-1$ and $\chi_\beta=0$ if $\beta\in(-1,1)$, then we have 
\begin{align*}
&\frac{d}{ds} \l( \{ \log (s+1)+1\}^{\chi_\beta}(\Gamma(s)+1)\int_{0}^{s}\frac{g(\tau) e^{\Gamma(\tau)} F(\tau)}{G(\tau)+1}  d\tau\r)
\\
&=\frac{\chi_\beta}{s+1} (\Gamma(s)+1)\int_{0}^{s}\frac{g(\tau) e^{\Gamma(\tau)} F(\tau)}{G(\tau)+1}  d\tau
\\
&\quad +\{ \log (s+1)+1\}^{\chi_\beta}\frac{1}{g(s)}\int_{0}^{s}\frac{g(\tau) e^{\Gamma(\tau)} F(\tau)}{G(\tau)+1}  d\tau
\\
&\quad +\{ \log (s+1)+1\}^{\chi_\beta}(\Gamma(s)+1) \frac{g(s) e^{\Gamma(s)} F(s)}{G(s)+1}. 
\end{align*}
Since the first term in the right hand side is non-negative, $\{ \log (s+1)+1\}^{\chi_\beta} \geq1$ holds for  any $ s\geq 0$ and $\beta \in [-1,1)$, and we have, by Lemma \ref{lem2.2},
\begin{align*}
& \{ \log (s+1)+1\}^{\chi_\beta}(\Gamma(s)+1) \frac{g(s)}{G(s)+1}
\\
& \ceq 
\begin{cases}
\{ \log (s+1)+1\} (s+1)^{2} \frac{(s+1)^{-1}}{\log(s+1)+1}  & \text{ if } \beta=-1,
\\
(s+1)^{-\beta+1} \frac{(s+1)^{\beta}}{(s+1)^{\beta+1}} & \text{ if } \beta\in(-1,1)
\end{cases}
\\
& =
\begin{cases}
 s+1  & \text{ if } \beta=-1,
\\
(s+1)^{-\beta}  & \text{ if } \beta\in(-1,1)
\end{cases}
\\
& \ceq \frac{1}{g(s)},
\end{align*}
we get
\begin{align*}
&\frac{d}{ds} \l( \{ \log (s+1)+1\}^{\chi_\beta}(\Gamma(s)+1)\int_{0}^{s}\frac{g(\tau) e^{\Gamma(\tau)} F(\tau)}{G(\tau)+1}  d\tau\r)
\\
& \cgeq  \frac{1}{g(s)} e^{\Gamma(s)}F(s) + \frac{1}{g(s)}  \int_{0}^{s} \frac{g(\tau) e^{\Gamma(\tau)} F(\tau)}{G(\tau)+1}  d\tau.
\end{align*}
Therefore, by \eqref{eq3.8}, we get
\begin{align*}
&\int_{0}^{t} \frac{1}{g(s)} e^{\Gamma(s)}F(s) ds +C_{2}\int_{0}^{t} \frac{1}{g(s)}  \int_{0}^{s} \frac{g(\tau) e^{\Gamma(\tau)} F(\tau)}{G(\tau)+1}   d\tau ds
\\
&\cleq  \int_{0}^{t} \frac{d}{ds} \l( \{ \log (s+1)+1\}^{\chi_\beta}(\Gamma(s)+1)\int_{0}^{s}\frac{g(\tau) e^{\Gamma(\tau)} F(\tau)}{G(\tau)+1}  d\tau\r) ds
\\
&= \{ \log (t+1)+1\}^{\chi_\beta}(\Gamma(t)+1)\int_{0}^{t}\frac{g(\tau) e^{\Gamma(\tau)} F(\tau)}{G(\tau)+1}  d\tau
\\
&=:X(t).
\end{align*}
We set 
\begin{align*}
Y(t):= \int_{0}^{t} \frac{1}{g(s)}  \int_{0}^{s} \frac{ (e^{\Gamma(s)}- e^{\Gamma(\tau)}) g(\tau)F(\tau)^p}{(G(\tau)+1) ^{\frac{n}{2}(p-1)}}  d\tau ds.
\end{align*}
Then, by \eqref{eq3.7}, we get
\begin{align}
\label{eq3.9}
C_6 X(t) \geq C_5 Y(t)+  \int_{0}^{t} \frac{1}{g(s)} \l\{ \int_{0}^{s} (e^{\Gamma(\tau)})'( \eps H(\tau) -I(\tau))d\tau+A(0)\r\} ds.
\end{align}
Now, we have
\begin{align*}
Y'(t)= \frac{1}{g(t)}  \int_{0}^{t} \frac{ (e^{\Gamma(t)}- e^{\Gamma(\tau)}) g(\tau)F(\tau)^p}{(G(\tau)+1) ^{\frac{n}{2}(p-1)}}  d\tau,
\end{align*}
and 
\begin{align*}
Y''(t)
&=-  \frac{g'(t)}{g(t)^2}  \int_{0}^{t} \frac{ (e^{\Gamma(t)}- e^{\Gamma(\tau)}) g(\tau)F(\tau)^p}{(G(\tau)+1) ^{\frac{n}{2}(p-1)}}  d\tau 
 +   \frac{e^{\Gamma(t)} }{g(t)^2}  \int_{0}^{t} \frac{ g(\tau)F(\tau)^p}{(G(\tau)+1) ^{\frac{n}{2}(p-1)}}  d\tau 
\\
&=-\frac{g'(t)}{g(t)} Y'(t)+\frac{1}{g(t)}\l( Y'(t) + \frac{1}{g(t)}  \int_{0}^{t} \frac{ e^{\Gamma(\tau)} g(\tau)F(\tau)^p}{(G(\tau)+1) ^{\frac{n}{2}(p-1)}}  d\tau\r).
\end{align*}
Therefore, we have
\begin{align}
\label{eq2.11}
g(t)Y''(t)+g'(t)Y'(t) -Y'(t) =\frac{1}{g(t)}  \int_{0}^{t} \frac{ e^{\Gamma(\tau)} g(\tau)F(\tau)^p}{(G(\tau)+1)^{\frac{n}{2}(p-1)}}  d\tau.
\end{align}
On the other hand, by the H\"{o}lder inequality, we have
\begin{align}
\label{eq2.12}
X(t)&=\{ \log (t+1)+1\}^{\chi_\beta}(\Gamma(t)+1)\int_{0}^{t}\frac{g(\tau) e^{\Gamma(\tau)} F(\tau)}{G(\tau)+1}  d\tau
\\ \notag
&\leq \{ \log (t+1)+1\}^{\chi_\beta}(\Gamma(t)+1) \l( \int_{0}^{t}\frac{g(\tau) e^{\Gamma(\tau)} F(\tau)^p}{(G(\tau)+1) ^{\frac{n}{2}(p-1)}}  d\tau \r)^{1/p}
\\ \notag
&\quad \times \l( \int_{0}^{t}\frac{g(\tau) e^{\Gamma(\tau)}}{(G(\tau)+1) ^{\l(1-\frac{n(p-1)}{2p}\r)p'}}  d\tau\r)^{1/p'}
\\ \notag
&= \{ \log (t+1)+1\}^{\chi_\beta}(\Gamma(t)+1) \l( \int_{0}^{t}\frac{g(\tau) e^{\Gamma(\tau)} F(\tau)^p}{(G(\tau)+1) ^{\frac{n}{2}(p-1)}}  d\tau \r)^{1/p}
\\ \notag
&\quad \times \l( \int_{0}^{t}\frac{g(\tau) e^{\Gamma(\tau)}}{(G(\tau)+1) ^{p'-\frac{n}{2}}}  d\tau\r)^{1/p'}.
\end{align}
Therefore, combining \eqref{eq2.11} with \eqref{eq2.12}, we get
\begin{align}
\label{eq2.13}
&g(t)Y''(t)+g'(t)Y'(t) -Y'(t) 
\\ \notag
&=\frac{1}{g(t)}  \int_{0}^{t} \frac{ e^{\Gamma(\tau)} g(\tau)F(\tau)^p}{(G(\tau)+1)^{\frac{n}{2}(p-1)}}  d\tau
\\ \notag
&\geq  \frac{1}{g(t)}  X(t)^p \l[ \{ \log (t+1)+1\}^{\chi_\beta}(\Gamma(t)+1)\r]^{-p} \l( \int_{0}^{t}\frac{g(\tau) e^{\Gamma(\tau)}}{(G(\tau)+1) ^{p'-\frac{n}{2}}}  d\tau\r)^{-p/p'}.
\end{align}
We want to apply \eqref{eq3.9} to the above inequality. It is not clear whether the right hand side in \eqref{eq3.9} is positive for any $t>0$. However, for sufficiently large $t$, the right hand side is positive. To see this, we consider the second term in the right hand side of \eqref{eq3.9}, that is, 
\begin{align*}
 \int_{0}^{t} \frac{1}{g(s)} \l\{ \int_{0}^{s} (e^{\Gamma(\tau)})'( \eps H(\tau) -I(\tau))d\tau+A(0)\r\} ds.
\end{align*}
We recall that 
\begin{align*}
H(t)
&=\eps^{-1}(C(t)-E_2(t) )
\\
& =  \l( \frac{G(t)+1}{2G(t)+1}\r)^{n/2}  \int_{\R^n} e^{-\frac{|x|^2}{4(2G(t)+1)}} (u_0+g(0)u_1)(x)  dx 
\\
&\quad + \l( \frac{G(t)+1}{2G(t)+1}\r)^{n/2}  \frac{n g(0)^2}{2(2G(t)+1)}  \int_{\R^n}   e^{-\frac{|x|^2}{4(2G(t)+1)}} u_0 (x)dx
\\
&\quad - \l( \frac{G(t)+1}{2G(t)+1}\r)^{n/2}  \frac{g(0)^2 }{2G(t)+1} \int_{\R^n}  \frac{|x|^2}{4(2G(t)+1)} e^{-\frac{|x|^2}{4(2G(t)+1)}} u_0 (x)dx.
\end{align*}
Since $G(t) \to \infty $ as $t\to \infty$ and $u_0$, $u_1$ belong to $L^1(\R^n)$, 
the dominated convergence theorem implies
\begin{align*}
\lim_{t\to \infty} H(t) = 2^{-n/2}  \int_{\R^n} u_0(x)+g(0)u_1(x) dx=:J_0.  
\end{align*}

Moreover, by the assumption on the initial data, there exists $t_1 = t_1(n,u_0,u_1)>0$ such that 
\[ H(t) > \frac{J_0}{2}>0 \text{ for any } t \geq t_1.\]

Next, we treat $I$. 
\begin{align*}
I(t)&=C_{3} ( G(t)+1)^{\frac{n}{2}-2p'} \l( \int_{0}^{t}  g(s)^{2p'+1} ds\r) 
\\
&\quad +C_{4} (G(t)+1)^{\frac{n}{2}-p'} \l( \int_{0}^{t} g(s)|g'(s)|^{p'}ds\r)
\\
&=:I_1(t)+I_2(t).
\end{align*}
We consider the first term $I_1$. 
First, we consider the case of $\beta \in (-1,1)$. Since we have
\begin{align*}
\int_{0}^{t}  g(s)^{2p'+1} ds
\ceq \int_{0}^{t}  (s+1)^{\beta(2p'+1)} ds
 \cleq 
\begin{cases}
(t+1)^{\beta(2p'+1)+1} & \text{ if } \beta \neq -1/(2p'+1),
\\
\log(t+1) & \text{ if } \beta = -1/(2p'+1),
\end{cases}
\end{align*}
and Lemma \ref{lem2.2} (iv), we obtain
\begin{align*}
I_1(t)
& \cleq
\begin{cases}
(t+1)^{(\beta+1) \l( \frac{n}{2}+1\r)-2p'} &  \text{ if } \beta \neq -1/(2p'+1),
\\
(t+1)^{(\beta+1)\l( \frac{n}{2}-2p'\r)} \log(t+1) & \text{ if } \beta = -1/(2p'+1)
\end{cases} 
\\
&\leq 
\begin{cases}
(t+1)^{(\beta+1) \l( \frac{n}{2}+1\r)-2p'} &  \text{ if } \beta \neq -1/(2p'+1),
\\
(t+1)^{(\beta+1)\l( \frac{n}{2}-2p'\r)+\delta} & \text{ if } \beta = -1/(2p'+1),
\end{cases} 
\end{align*}
for any $\delta>0$.
Since $p=1+2/n$, we have $p' = 1+n/2$. Therefore, in the first case, \textit{i.e.} $ \beta \neq -1/(2p'+1)$,  we get
\begin{align*}
(\beta+1) \l( \frac{n}{2}+1\r)-2p' 
=(\beta-1) \l( \frac{n}{2}+1\r)
<0
\end{align*}
since $\beta<1$. In the second case, \textit{i.e.} $ \beta = -1/(2p'+1)$, noting that $\beta = -1/(2p'+1)>-1$,  we have
\begin{align*}
(\beta+1) \l( \frac{n}{2} -2p'\r) 
&\leq -(\beta+1) \l( \frac{n}{2} +2\r)<0.
\end{align*}
Take sufficiently small $\delta>0$ such that $(\beta+1) \l( n/2 -2p'\r) +\delta<0$. Then, we take $t_2=t_2(\eps_0, \beta,n,p)>0$ such that
\begin{align*}
&t_2 =( C \eps_0)^{\frac{1}{(\beta+1) \l( \frac{n}{2}+1\r)-2p'}} -1 \text{ if }  \beta \neq -1/(2p'+1),
\\
&t_2 =( C \eps_0)^{\frac{1}{(\beta+1)\l( \frac{n}{2}-2p'\r)+\delta}}-1 \text{ if } \beta = -1/(2p'+1).
\end{align*}
Then we have 
\begin{align*}
&(t+1)^{(\beta+1) \l( \frac{n}{2}+1\r)-2p'} \leq C \eps_0 \text{ if }  \beta \neq -1/(2p'+1),
\\
&(t+1)^{(\beta+1)\l( \frac{n}{2}-2p'\r)+\delta} \leq C \eps_0 \text{ if } \beta = -1/(2p'+1),
\end{align*}
for any $t>t_{2}$ and thus we have $I_1(t)\leq C \eps_0$ for $t>t_2$. 

Next, we consider the case of $\beta=-1$. Then, $\beta \neq  -1/(2p'+1)$. 
Therefore, 
\begin{align*}
I_1(t)
& \cleq (\log (t+1)+1)^{\frac{n}{2}-2p'} (t+1)^{\beta(2p'+1)+1} 
\\
& \leq (\log (t+1)+1)^{\frac{n}{2}-2p'} (t+1)^{-2p'}
\\
& \leq (t+1)^{-2p'},
\end{align*}
since $n/2-2p'<0$. Thus, $I_1(t)\cleq \eps_0$ for $t>t_2=t_2(\eps_0):= C \eps_0^{-1/(2p')}-1$.

Secondly, we treat the second term $I_2$. By Lemma \ref{lem2.2} (iii), there exists $t_3=t_3(\beta)>0$ such that $|g'(t)|\ceq (t+1)^{\beta-1}$ for $t>t_3$. 
We take $t_4=t_4 (\eps_0, \beta,n,p)>0$ such that 
\begin{align*}
t_4= \l(\frac{1}{2} \eps_0\r)^{\frac{1}{(\beta-1)p'}}-1. 
\end{align*}
Taking sufficiently small $\eps_0>0$, we may assume that $t_4>t_3$. 
Then, $|g'(t)| \cleq (t+1)^{\beta-1} \leq (\eps_0/2)^{1/p'}$ for $t>t_4$. 
As seen in Lemma \ref{lem2.2} (i), $g'(t)$ converges to $0$ as $t \to \infty$ . Thus, we can define $m:=\max_{t\in[0,\infty)} |g'(t)| < \infty$. 
Then, for $t>t_4$, we have 
\begin{align*}
\int_{0}^{t} g(s)|g'(s)|^{p'}ds
&=  \int_{0}^{t_4} g(s)|g'(s)|^{p'}ds + \int_{t_4}^{t} g(s)|g'(s)|^{p'}ds
\\
&\leq m^{p'} G(t_4)+ \frac{C}{2} \eps_0 G(t),
\end{align*}
where we have used $|g'(t)|^{p'}<\eps_0/2 $ for $t>t_4$
We show that there exists $t_5=t_5 (\eps_0,\beta,n,p)>0$ such that $m^{p'} G(t_4) \cleq \eps_0 G(t)/2$. We consider the case of $\beta \neq -1$. Then, we have
\begin{align*}
G(t_4 ) \leq C (t_4+1)^{\beta+1}  \ceq  \l( \frac{1}{2} \eps_0\r)^{\frac{\beta+1}{(\beta-1)p'}}.
\end{align*}
Take $t_5=t_5(\eps_0,\beta,n,p) :=  \eps_0^{\frac{1}{(\beta-1)p'}-\frac{1}{\beta+1}}-1$. Then, for $t>t_5$,
\begin{align*}
\eps_0 G(t) \geq \eps_0 G(t_5) \cgeq \eps_0^{\frac{\beta+1}{(\beta-1)p'}} \geq G(t_4).
\end{align*}
We consider the case of $\beta = -1$. Then, we have
\begin{align*}
G(t_4) \leq C \log(t_4+1)  \leq C \log\l\{  \l(\frac{1}{2} \eps_0\r)^\frac{1}{(\beta-1)p'} \r\}.
\end{align*}
Take $t_5= t_5 (\eps_0,\beta,n,p) = \exp \{ \eps_0^{-1}\log (\eps_0/2)^{\frac{1}{(\beta-1)p'} }\}-1$. Then, for $t>t_5$,
\begin{align*}
\eps_0 G(t) \geq \eps_0 G(t_5) \cgeq \eps_0 \log(  t_5+1)= \log\l\{ \l(\frac{1}{2} \eps_0\r)^\frac{1}{(\beta-1)p'} \r\}  \cgeq G(t_4).
\end{align*}
Therefore, for $t>t_5$, we have
\begin{align*}
\int_{0}^{t} g(s)|g'(s)|^{p'}ds
\leq m^{p'} G(t_4)+ \frac{C}{2} \eps_0 G(t)
\leq C \eps_0 G(t),
\end{align*}
and thus we obtain
\begin{align*}
I_2(t) 
\cleq  (G(t)+1)^{\frac{n}{2}-p'+1} \frac{\int_{0}^{t} g(s)|g'(s)|^{p'}ds}{G(t)}
\cleq  \eps_0,
\end{align*}
where we also use $n/2-p'+1 = 0$ in the last inequality. 

By the above argument, we get 
\begin{align*}
I(t) \leq C' \eps_{0}
\end{align*}
for $t>\max\{t_2,t_5\}$. 
Let $\eps_0$ be sufficiently small such that $C' \eps_0< J_0 \eps /4$. 
Take large $t_6=t_6(\eps, \beta, n,p)=\max \{t_1, t_2,\cdots, t_5 \}$, where $t_j$ ($j=1,\cdots, 5$) are defined before. Noting that $A(0)=\eps A_0$, where $A_0$ is a constant,  then, we get 
\begin{align*}
 &\int_{0}^{t} \frac{1}{g(s)} \l\{ \int_{0}^{s} (e^{\Gamma(\tau)})'( \eps H(\tau) -I(\tau))d\tau+A(0)\r\} ds
 \\
& = \int_{0}^{t_6} \frac{1}{g(s)}  \int_{0}^{s}  (e^{\Gamma(\tau)})'( \eps H(\tau) -I(\tau))d\tau  ds
 \\
&\quad + \int_{t_6}^{t} \frac{1}{g(s)}  \int_{0}^{t_6} (e^{\Gamma(\tau)})'( \eps H(\tau) -I(\tau))d\tau  ds
 \\
&\quad + \int_{t_6}^{t} \frac{1}{g(s)}   \int_{t_6}^{s} (e^{\Gamma(\tau)})'( \eps H(\tau) -I(\tau))d\tau  ds
 \\
&\quad +  \eps A_0 \Gamma (t)
\\
& =: I + I\!\!I + I\!\!I\!\!I + \eps A_0 \Gamma (t)
\end{align*}
We estimate the terms $I$,  $I\!\!I$, and $I\!\!I\!\!I$. 
For $t>t_6$, we have $H(t)>J_0/2>0$ as seen before. Moreover, we have $I(t) \leq C \eps_0 \leq J_0 \eps /4 $ for $t>t_6$. This implies that $\eps H(t) -I(t) \geq J_0 \eps /4$ for $t>t_6$. Noting that $(e^{\Gamma(t)})' \geq0$, we can estimate $I\!\!I\!\!I$ as follows. 
\begin{align*}
I\!\!I\!\!I
&\geq  \frac{J_0}{4} \eps    \int_{t_6}^{t} \frac{1}{g(s)}  \int_{t_6}^{s}  (e^{\Gamma(\tau)})' d\tau  ds
\\
&=  \frac{J_0}{4} \eps   \int_{t_6}^{t} \frac{1}{g(s)}  (e^{\Gamma(s)}-e^{\Gamma(t_6)}) ds
\\
&=  \frac{J_0}{4} \eps \l\{ (e^{\Gamma(t)}-e^{\Gamma(t_6)}) - (\Gamma(t)-\Gamma(t_6) ) e^{\Gamma(t_6)}\r\}.
\end{align*}
Next, we consider the estimate of $I\!\!I$. Now, since $H$ is continuous and has a limit as $t \to \infty$, $H$ is bounded on $[0,\infty)$. Therefore, we can set $M_h:=\max_{t\in [0,\infty)}H(t)<\infty$. Similarly, we can also define $M_i:=\max_{t\in [0,\infty)} I(t)<\infty$. Then, since we have $\eps H(t) -I(t) \geq - (\eps M_h +M_i)$, $I\!\!I$ can be estimated as follows. 
\begin{align*}
 I\!\!I 
 & \geq -  (\eps M_h +M_i)\int_{t_6}^{t} \frac{1}{g(s)}  \int_{0}^{t_6}  (e^{\Gamma(\tau)})'  d\tau  ds
 \\
 & = -(\eps M_h +M_i)  (\Gamma(t)-\Gamma(t_6)) (e^{\Gamma(t_{\eps})}-1).
\end{align*}
At last, in the same manner, $I$ can be estimated as follows. 
\begin{align*}
I
&\geq  - (\eps M_h +M_i) \int_{0}^{t_6} \frac{1}{g(s)} \int_{0}^{s}   (e^{\Gamma(\tau)})' d\tau  ds
\\
&= - (\eps M_h +M_i) \l\{ (e^{\Gamma(t_6)}-1) - \Gamma(t_6)  \r\}.
\end{align*}
Combining these estimates, we obtain
\begin{align*}
I + I\!\!I + I\!\!I\!\!I + \eps A_0 \Gamma (t)
&\geq - (\eps M_h +M_i) \l\{ e^{\Gamma(t_6)}-1 +(\Gamma(t)-\Gamma(t_6)) e^{\Gamma(t_6)} -\Gamma(t)  \r\}
\\
& \quad + C\eps \l\{ (e^{\Gamma(t)}-e^{\Gamma(t_6)}) - (\Gamma(t)-\Gamma(t_6) ) e^{\Gamma(t_6)}\r\} + \eps A_0 \Gamma (t)
\\
&\geq - (\eps M_h +M_i) \l\{ e^{\Gamma(t_6)} +(\Gamma(t)-\Gamma(t_6)) e^{\Gamma(t_6)}  \r\}
\\
&\quad + C\eps \l\{ (e^{\Gamma(t)}-e^{\Gamma(t_6)}) - (\Gamma(t)-\Gamma(t_6) ) e^{\Gamma(t_6)}\r\} + \eps A_0 \Gamma (t) 
\\
&= - (\eps M_h +M_i) \l\{ e^{\Gamma(t_6)} +(\Gamma(t)-\Gamma(t_6)) e^{\Gamma(t_6)}  \r\}
\\
& \quad + C\eps \l\{ (e^{\Gamma(t)}-e^{\Gamma(t_6)}) - (\Gamma(t)-\Gamma(t_6) ) e^{\Gamma(t_6)}\r\}  
\\
& \quad +  \eps A_0 \Gamma (t)+\Gamma(t_6)\eps |A_0| e^{\Gamma(t_6)}-\Gamma(t_6)\eps |A_0| e^{\Gamma(t_6)}
\\
&\geq - (\eps M_h +M_i) \l\{ e^{\Gamma(t_6)} +(\Gamma(t)-\Gamma(t_6)) e^{\Gamma(t_6)}  \r\}
\\
&\quad + C\eps \l\{ (e^{\Gamma(t)}-e^{\Gamma(t_6)}) - (\Gamma(t)-\Gamma(t_6) ) e^{\Gamma(t_6)}\r\}  
\\
& \quad - \Gamma (t) \eps |A_0| e^{\Gamma(t_6)} +\Gamma(t_6)\eps |A_0| e^{\Gamma(t_6)}-\Gamma(t_6)\eps |A_0| e^{\Gamma(t_6)}
\\
&= - (\eps M_h +M_i) (\Gamma(t)-\Gamma(t_6)) e^{\Gamma(t_6)}
- (\eps M_h +M_i) e^{\Gamma(t_6)} 
\\
&\quad + C\eps e^{\Gamma(t)}- C\eps e^{\Gamma(t_6)} - C\eps  (\Gamma(t)-\Gamma(t_6) ) e^{\Gamma(t_6)}
\\
&\quad - ( \Gamma (t) - \Gamma(t_6))\eps |A_0| e^{\Gamma(t_6)} -\Gamma(t_6)\eps |A_0| e^{\Gamma(t_6)}
\\
&\geq - (\eps M_h +M_i+C\eps+\Gamma(t_6) \eps |A_0| ) (\Gamma(t)-\Gamma(t_6)) e^{\Gamma(t_6)}
\\
&\quad - (\eps M_h +M_i +C\eps+\Gamma(t_6) \eps |A_0|) e^{\Gamma(t_6)} 
+ C\eps e^{\Gamma(t)}
\\
&= e^{\Gamma(t_6)}  \l\{  C\eps e^{\Gamma(t)-\Gamma(t_6)} -K_{\eps} -K_{\eps} (\Gamma(t)-\Gamma(t_6))\r\},
\end{align*}
where $K_{\eps}:=\eps M_h +M_i+C\eps+\Gamma(t_6)\eps |A_0|$.
We set $\lambda(t):=\Gamma(t)-\Gamma(t_6)$ and $U(t):= C\eps e^{\lambda(t)} -K_{\eps}-K_{\eps}\lambda(t)$. We find $t$ such that $U(t)> C_7 \eps e^{\lambda(t)}$, where $C_7<C$. 
If we take $t_7$ such that $\lambda(t_7)> 2\log ((C-C_7)^{-1}K_{\eps} \eps^{-1}) + M$, where $M$ is a positive constant such that $e^{s/2}<e^{s}/(s+1)$ for $s>M$, then $U(t)-C_7 \eps e^{\lambda(t)}$ is increasing for $t>t_7$ and thus
\begin{align*}
U(t)-C_7 \eps e^{\lambda(t)}
&= (C-C_7)\eps e^{\lambda(t)} -K_{\eps}-K_{\eps}\lambda(t)
\\
&\geq (C-C_7)\eps e^{\lambda(t_7)} -K_{\eps}-K_{\eps}\lambda(t_7)
\\
&=\l\{ (C-C_7)\eps e^{\lambda(t_7)} \{\lambda(t_7)+1\}^{-1} -K_{\eps} \r\} (\lambda(t_7)+1)
\\
&> \l\{ (C-C_7)\eps e^{\lambda(t_7)/2} -K_{\eps} \r\}( \lambda(t_7)+1)
\\
&\geq \l\{ (C-C_7)\eps e^{\log ((C-C_7)^{-1}K_{\eps} \eps^{-1})} -K_{\eps} \r\} ( \lambda(t_7)+1)
=0.
\end{align*}
Now, since we have
\begin{align*}
\lambda(t_7) = \Gamma(t_7)-\Gamma(t_6) \cgeq \{ (t_7+1)^{-\beta+1} - (t_6+1)^{-\beta+1}\},
\end{align*}
it is enough to take $t_7$ such that
\begin{align*}
t_7 =   \l[(t_6+1)^{-\beta+1} + C\l\{ 2\log ((C-C_7)^{-1}K_{\eps} \eps^{-1}) + M\r\} \r]^{1/(-\beta+1)}-1. 
\end{align*} 

Take $t_{\eps}:= \max\{t_6,t_7\}$. Then, by the above argument, we obtain
\begin{align*}
\int_{0}^{t} \frac{1}{g(s)} \l\{ \int_{0}^{s} (e^{\Gamma(\tau)})'( \eps H(\tau) -I(\tau))d\tau+A(0)\r\} ds
\geq C_7 \eps e^{\Gamma(t)},
\end{align*}
for $t>t_{\eps}$ and thus, by \eqref{eq3.9}, we have
\begin{align}
\label{eq2.14}
C_6 X(t) 
\geq C_5 Y(t)+  C_7 \eps e^{\Gamma(t)} \text{ for any } t>t_{\eps}.
\end{align}
Therefore, applying \eqref{eq2.14} to \eqref{eq2.13} for $t>t_{\eps}$, we obtain 
\begin{align}
\label{eq2.15}
&g(t)^2Y''(t)+g(t)g'(t)Y'(t) -g(t) Y'(t) 
\\ \notag
&\geq  C_5 X(t)^p \l[ \{ \log (t+1)+1\}^{\chi_\beta}(\Gamma(t)+1)\r]^{-p} \l( \int_{0}^{t}\frac{g(\tau) e^{\Gamma(\tau)}}{(G(\tau)+1) ^{p'-\frac{n}{2}}}  d\tau\r)^{-p/p'}
\\ \notag
& \cgeq (Y(t)+ \eps e^{\Gamma(t)})^p \l[ \{ \log (t+1)+1\}^{\chi_\beta}(\Gamma(t)+1)\r]^{-p} \l( \int_{0}^{t}\frac{g(\tau) e^{\Gamma(\tau)}}{(G(\tau)+1) ^{p'-\frac{n}{2}}}  d\tau\r)^{-p/p'}.
\end{align}
We set $Z(t):=e^{-\Gamma(t)} Y(t)$. Then, simple calculations give 
\begin{align*}
Y'(t) &= g(t)^{-1} e^{\Gamma(t)} Z(t)+ e^{\Gamma(t)} Z'(t),
\\
Y''(t) & = -g(t)^{-2} g'(t) e^{\Gamma(t)} Z(t) +g(t)^{-2}  e^{\Gamma(t)} Z(t) + 2 g(t)^{-1} e^{\Gamma(t)} Z' (t)+ e^{\Gamma(t)} Z''(t),
\end{align*}
and thus we have
\begin{align*}
&g(t)^2Y''(t)+g(t)g'(t)Y'(t) -g(t) Y'(t) 
\\
& =g(t)^2  e^{\Gamma(t)} Z''(t) +g(t)(g'(t)+1) e^{\Gamma(t)} Z'(t).
\end{align*}
Therefore, from \eqref{eq2.15}, we obtain
\begin{align*}
&g(t)^2  e^{\Gamma(t)} Z''(t) +g(t)(g'(t)+1) e^{\Gamma(t)} Z'(t)
\\
& \cgeq  e^{p\Gamma(t)} (Z(t)+ \eps)^p \l[ \{ \log (t+1)+1\}^{\chi_\beta}(\Gamma(t)+1)\r]^{-p} \l( \int_{0}^{t}\frac{g(\tau) e^{\Gamma(\tau)}}{(G(\tau)+1) ^{p'-\frac{n}{2}}}  d\tau\r)^{-p/p'}.
\end{align*}
Define $W(t):=Z(t)+\eps$. Then,  noting that $g'+1=gb$, we get
\begin{align}
\label{eq2.16}
& W''(t) +b(t) W'(t)
\\ \notag
& \cgeq W(t)^p g(t)^{-2} e^{(p-1)\Gamma(t)}  \l[ \{ \log (t+1)+1\}^{\chi_\beta}(\Gamma(t)+1)\r]^{-p} \l( \int_{0}^{t}\frac{g(\tau) e^{\Gamma(\tau)}}{(G(\tau)+1) ^{p'-\frac{n}{2}}}  d\tau\r)^{-p/p'}.
\end{align}

\begin{lemma}
\label{lem2.6}
For any $t>0$, we have
\begin{align*}
\int_{0}^{t}\frac{g(\tau) e^{\Gamma(\tau)}}{(G(\tau)+1) ^{p'-\frac{n}{2}}}  d\tau \cleq \frac{g(t)^2 e^{\Gamma(t)}}{(G(t)+1) ^{p'-\frac{n}{2}}}.
\end{align*}
\end{lemma}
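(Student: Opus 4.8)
The plan is to exploit that $e^{\Gamma(\tau)}$ grows super-exponentially — by Lemma \ref{lem2.2}(v) one has $\Gamma(\tau)+1\ceq(\tau+1)^{1-\beta}$ with $1-\beta>0$ — so that the integral on the left is governed by a neighbourhood of its upper endpoint, with conversion factor $1/\Gamma'=g$, which accounts for the extra power of $g$ on the right-hand side. Writing $\nu:=p'-\tfrac n2$ (this equals $1$ since $p=p_F$, although below only the decay of $g^2/(G+1)$ will be used) and using $(e^{\Gamma(\tau)})'=g(\tau)^{-1}e^{\Gamma(\tau)}$ together with $G'=g$, I would first record, after one integration by parts in $\int_0^t \frac{g(\tau)^2}{(G(\tau)+1)^{\nu}}(e^{\Gamma(\tau)})'\,d\tau$,
\[
\int_0^t\frac{g(\tau)e^{\Gamma(\tau)}}{(G(\tau)+1)^{\nu}}\Bigl(1+2g'(\tau)-\nu\frac{g(\tau)^2}{G(\tau)+1}\Bigr)\,d\tau
=\frac{g(t)^2e^{\Gamma(t)}}{(G(t)+1)^{\nu}}-g(0)^2.
\]

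The crux is that the coefficient $1+2g'(\tau)-\nu g(\tau)^2/(G(\tau)+1)$ tends to $1$ as $\tau\to\infty$: indeed $g'(\tau)\to0$ by Lemma \ref{lem2.2}(i), while Lemma \ref{lem2.2}(ii) and (iv) give $g(\tau)^2/(G(\tau)+1)\ceq(\tau+1)^{\beta-1}\to0$ when $\beta\in(-1,1)$, and $g(\tau)^2/(G(\tau)+1)\ceq(\tau+1)^{-2}(\log(\tau+1)+1)^{-1}\to0$ when $\beta=-1$. I would then fix $\tau_0>0$ so that this coefficient is $\geq\tfrac12$ on $[\tau_0,\infty)$; note this also makes $\varphi(\tau):=g(\tau)^2e^{\Gamma(\tau)}(G(\tau)+1)^{-\nu}$ increasing on $[\tau_0,\infty)$ and, since $e^{\Gamma(\tau)}\to\infty$, bounded below there by $\varphi(\tau_0)>0$.

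To finish, I would split the integral at $\tau_0$: the contribution of $[0,\tau_0]$ is a finite $\eps$-independent constant, the integrand and the coefficient being bounded on that compact interval, and on $[\tau_0,t]$ the displayed identity together with the lower bound $\tfrac12$ on the coefficient gives $\tfrac12\int_{\tau_0}^t g(\tau)e^{\Gamma(\tau)}(G(\tau)+1)^{-\nu}\,d\tau\le\varphi(t)+C_0$ for $t>\tau_0$. Adding the two pieces and absorbing the constants into $\varphi(t)\ge\varphi(\tau_0)>0$ yields the asserted bound for $t>\tau_0$; for $t\in(0,\tau_0]$ it holds because the quotient of the two sides of the inequality is continuous on a compact interval with non-vanishing denominator, hence bounded. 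I expect the only genuinely delicate point to be the verification that the correction coefficient tends to $1$, i.e.\ that both $g'$ and $g^2/(G+1)$ vanish at infinity — exactly the content of Lemma \ref{lem2.2}(i),(ii),(iv); the remaining steps are routine bookkeeping.
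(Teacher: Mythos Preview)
Your argument is correct. Both your proof and the paper's rest on the same asymptotic input from Lemma~\ref{lem2.2}, namely $g'(\tau)\to 0$ and $g(\tau)^2/(G(\tau)+1)\to 0$, but the implementations differ. The paper factors $e^{\Gamma(\tau)}=e^{\Gamma(\tau)/2}\cdot e^{\Gamma(\tau)/2}$, shows that $f(\tau):=\tfrac12 g(\tau)^2 e^{\Gamma(\tau)/2}(G(\tau)+1)^{-\nu}$ is eventually increasing, pulls $f(t)$ out of the integral, and then integrates the remaining factor $(e^{\Gamma(\tau)/2})'$ explicitly. You instead compute directly that $\varphi(\tau)=g(\tau)^2 e^{\Gamma(\tau)}(G(\tau)+1)^{-\nu}$ satisfies $\varphi'(\tau)=\bigl(1+2g'(\tau)-\nu g(\tau)^2/(G(\tau)+1)\bigr)$ times the integrand, so that the integrand is asymptotically $\varphi'(\tau)$ and the integral is $\cleq\varphi(t)$ after splitting off a compact initial interval. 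Your route is arguably cleaner: it avoids the exponential splitting and makes the mechanism (the integrand is essentially the derivative of the right-hand side) transparent; the paper's split-exponential trick achieves the same thing slightly more indirectly. Either way the delicate point is exactly the one you identify, and your treatment of the small-$t$ range via continuity of the ratio is a tidy way to close the argument.
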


\begin{proof}
It is enough to consider the case of $t>\widetilde{t}$, where $\widetilde{t}>0$ is defined later. 
We have
\begin{align*}
\int_{\widetilde{t}}^{t}\frac{g(\tau) e^{\Gamma(\tau)}}{(G(\tau)+1) ^{p'-\frac{n}{2}}}  d\tau 
&=  \int_{\widetilde{t}}^{t}\frac{ e^{ \frac{1}{2}\Gamma(\tau) }}{\frac{1}{2}g(\tau)}
\cdot \frac{\frac{1}{2} g(\tau)^2 e^{\frac{1}{2} \Gamma(\tau)}}{(G(\tau)+1) ^{p'-\frac{n}{2}}} d\tau.
\end{align*}
We set 
\begin{align*}
f(\tau):= \frac{\frac{1}{2} g(\tau)^2 e^{\frac{1}{2} \Gamma(\tau)}}{(G(\tau)+1) ^{p'-\frac{n}{2}}}.
\end{align*}
Then, by a simple calculation, we have
\begin{align*}
f'(\tau)= \frac{e^{\frac{1}{2}\Gamma(\tau)} g(\tau) (G(\tau)+1)^{p'-\frac{n}{2}-1} \l\{\l(g'(\tau) +\frac{1}{4}\r)(G(\tau)+1)- \frac{1}{2}\l(p'-\frac{n}{2}\r)g(\tau)^2 \r\} }{ (G(\tau)+1) ^{2p'-n}}.
\end{align*}
By Lemma \ref{lem2.2} (i), $|g'(\tau)|<1/8$ for sufficiently large $\tau>0$. Thus, we have
\begin{align*}
&\l(g'(\tau) +\frac{1}{4}\r)(G(\tau)+1)- \frac{1}{2}\l(p'-\frac{n}{2}\r)g(\tau)^2 
\\
& \geq \l(\frac{1}{4}-\frac{1}{8}\r)(G(\tau)+1)- \frac{1}{2}\l(p'-\frac{n}{2}\r)g(\tau)^2 
\\
& \cgeq
\l\{
\begin{array}{ll}
\displaystyle \frac{1}{8}(\tau+1)^{\beta+1} - \frac{C}{2}\l(p'-\frac{n}{2}\r)(\tau+1)^{2\beta}, & \text{ if }\beta \in (-1,1),
\\
\ 
\\
\displaystyle \frac{1}{8}\log (\tau+1)- \frac{C}{2}\l(p'-\frac{n}{2}\r)(\tau+1)^{-2}, & \text{ if }\beta=-1,
\end{array}
\r.
\\
&>0,
\end{align*}
for large $\tau>0$ since $\beta<1$. 
We take $\widetilde{t}>0$ such that $f'(\tau)>0$ for $\tau>\widetilde{t}$. Then, $f(\tau)$ is increasing for $\tau>\widetilde{t}$. 
Therefore, we obtain
\begin{align*}
\int_{\widetilde{t}}^{t}\frac{g(\tau) e^{\Gamma(\tau)}}{(G(\tau)+1) ^{p'-\frac{n}{2}}}  d\tau 
&\leq  \int_{\widetilde{t}}^{t}\frac{ e^{ \frac{1}{2}\Gamma(\tau) }}{\frac{1}{2}g(\tau)}  d\tau
\cdot \frac{\frac{1}{2} g(t)^2 e^{\frac{1}{2} \Gamma(t)}}{(G(t)+1) ^{p'-\frac{n}{2}}}
\\
& \leq  e^{\frac{1}{2}\Gamma(t)}\cdot \frac{\frac{1}{2} g(t)^2 e^{\frac{1}{2} \Gamma(t)}}{(G(t)+1) ^{p'-\frac{n}{2}}}
\\
&= \frac{ g(t)^2 e^{ \Gamma(t)}}{2(G(t)+1) ^{p'-\frac{n}{2}}},
\end{align*}
for $t>\widetilde{t}$. On the other hand, we have
\begin{align*}
\int_{0}^{\widetilde{t}} \frac{g(\tau) e^{\Gamma(\tau)}}{(G(\tau)+1) ^{p'-\frac{n}{2}}}  d\tau 
&\leq C.
\end{align*}
Combining them, we get the statement. 
\end{proof}

Then, by Lemma \ref{lem2.6} and \eqref{eq2.16}, we get
\begin{align*}
& W''(t) +b(t) W'(t)
\\
& \cgeq W(t)^p g(t)^{-2} e^{(p-1)\Gamma(t)}  \l[ \{ \log (t+1)+1\}^{\chi_\beta}(\Gamma(t)+1)\r]^{-p} \l( \frac{g(t)^2 e^{\Gamma(t)}}{(G(t)+1) ^{p'-\frac{n}{2}}}\r)^{-p/p'}
\\
&= W(t)^p g(t)^{-2p}  \l[ \{ \log (t+1)+1\}^{\chi_\beta}(\Gamma(t)+1)\r]^{-p} (G(t)+1) ^{\l(p'-\frac{n}{2}\r)(p-1)},
\end{align*}
for $t\geq t_{\eps}$. 
To derive an ordinary differential inequality, we consider two cases, \textit{i.e.} $\beta \in (-1,1)$ and $\beta=-1$, separately.

{\bf Case1.} We consider the case of $\beta \in (-1,1)$. 
When $\beta\in (-1,1)$, by Lemma \ref{lem2.2}, we have
\begin{align*}
&g(t)^{-2p}  \l[ \{ \log (t+1)+1\}^{\chi_\beta}(\Gamma(t)+1)\r]^{-p} (G(t)+1) ^{\l(p'-\frac{n}{2}\r)(p-1)}
\\
&= g(t)^{-2p}  (\Gamma(t)+1)^{-p} (G(t)+1) ^{\l(p'-\frac{n}{2}\r)(p-1)}
\\
&\ceq (t+1)^{-2\beta p} (t+1)^{(\beta-1) p} (t+1) ^{\l(p'-\frac{n}{2}\r)(p-1)(\beta+1)}
\\
& =  (t+1) ^{\l\{ \l(p'-\frac{n}{2}\r)(p-1) -p \r\}(\beta+1)}.
\end{align*} 
Since $p'=p/(p-1)$,  we have
\begin{align*}
\l\{ \l(p'-\frac{n}{2}\r)(p-1) -p \r\}(\beta+1)
=-\frac{n}{2}(p-1)(\beta+1).
\end{align*}
Thus, we obtain
\begin{align*}
W''(t) +b(t) W'(t) \cgeq  W(t)^p (t+1) ^{-\frac{n}{2}(p-1)(\beta+1)}.
\end{align*}
for $t\geq t_{\eps}$. 
Now, by the Fubini--Tonelli theorem, we have 
\begin{align*}
W'(t)
&=Z'(t)
\\
&=g(t)^{-1} e^{-\Gamma(t)}\l\{
 \int_{0}^{t} \frac{ (e^{\Gamma(t)}- e^{\Gamma(\tau)}) g(\tau)F(\tau)^p}{(G(\tau)+1) ^{\frac{n}{2}(p-1)}}  d\tau \r.
\\
&\quad  \l.
 -  \int_{0}^{t} \frac{1}{g(s)}  \int_{0}^{s} \frac{ (e^{\Gamma(s)}- e^{\Gamma(\tau)}) g(\tau)F(\tau)^p}{(G(\tau)+1) ^{\frac{n}{2}(p-1)}}  d\tau ds\r\} 
 \\
 &=g(t)^{-1} e^{-\Gamma(t)}
 \int_{0}^{t} \frac{1}{g(s)}  \int_{0}^{s} \frac{  e^{\Gamma(\tau)} g(\tau)F(\tau)^p}{(G(\tau)+1) ^{\frac{n}{2}(p-1)}}  d\tau ds
\\
& \geq 0,
\end{align*}
for any $t>0$. 
Therefore, since $W'$ is positive and $p=1+2/n$, we get
\begin{align}
\label{eq2.17}
(t+1)^{\beta} W''(t) +C W'(t) \cgeq \frac{ W(t)^p }{ t+1} \text{ for }t\geq t_{\eps}.
\end{align}

{\bf Case2.} We consider the case of $\beta=-1$. 
When $\beta=-1$, by Lemma \ref{lem2.2}, we have
\begin{align*}
&g(t)^{-2p}  \l[ \{ \log (t+1)+1\}^{\chi_\beta}(\Gamma(t)+1)\r]^{-p} (G(t)+1) ^{\l(p'-\frac{n}{2}\r)(p-1)}
\\
&\ceq (t+1)^{2p} \{ \log (t+1)+1\} ^{-p} (t+1)^{-2p} (\log(t+1)+1) ^{\l(p'-\frac{n}{2}\r)(p-1)}
\\
& = (\log(t+1)+1) ^{\l(p'-\frac{n}{2}\r)(p-1)-p}.
\end{align*} 
Since $p'=p/(p-1)$,  we have
\begin{align*}
\l(p'-\frac{n}{2}\r)(p-1)-p
=-\frac{n}{2}(p-1).
\end{align*}
Therefore, we obtain
\begin{align*}
W''(t) +b(t) W'(t) \cgeq  W(t)^p (\log(t+1)+1) ^{-\frac{n}{2}(p-1)}
\end{align*}
for $t\geq t_{\eps}$. 
Since $W'$ is positive and $p=1+2/n$, we get
\begin{align}
\label{eq2.18}
(t+1)^{-1} W''(t) +C W'(t) \cgeq \frac{ W(t)^p }{ (t+1)(\log(t+1)+1) } \text{ for }t\geq t_{\eps}.
\end{align}

We get objective ordinary inequalities.

\begin{proof}[Proof of Theorem \ref{thm1.1}]
We prove 
\begin{align}
\label{eq2.19}
\l\{
\begin{array}{ll}
T(\eps) \leq t_\eps + \exp(C\eps ^{-(p-1)}), & \text{ when } \beta \in (-1,1),
\\
T(\eps) \leq t_\eps + \exp \l(\exp(C\eps ^{-(p-1)})\r), & \text{ when } \beta =-1.
\end{array}
\r.
\end{align}
Since the above estimates hold if $T(\eps)<t_\eps$, we may assume that $T(\eps)>t_\eps$. Then, by the above argument, \eqref{eq2.17} or \eqref{eq2.18} hold for $t \in (t_\eps, T(\eps))$. 
To apply Lemmas \ref{lemA.1} and \ref{lemA.2}  in Appendix \ref{secA}  to \eqref{eq2.17} and \eqref{eq2.18} respectively, we check the positivity condition on the initial data. We have
\begin{align*}
W(t_\eps)&=Z(t_\eps)+\eps>0.
\end{align*}
Moreover, as seen above, we also have 
\begin{align*}
W'(t_\eps) \geq 0.
\end{align*}
Taking sufficiently small $\eps>0$, we can use Lemmas \ref{lemA.1} and \ref{lemA.2} and thus we get \eqref{eq2.19}. 
By the definition of $t_\eps$, for sufficiently small $\eps>0$, we have
$t_\eps <  \exp(C\eps ^{-(p-1)})$ when $\beta \in (-1,1)$ and $t_\eps <  \exp \l(\exp(C\eps ^{-(p-1)})\r)$ when $\beta=-1$. Therefore, we obtain the sharp upper estimate of the lifespan:
\begin{align*}
\l\{
\begin{array}{ll}
T(\eps) \leq  \exp(C\eps ^{-(p-1)}), & \text{ when } \beta \in (-1,1),
\\
T(\eps) \leq  \exp \l(\exp(C\eps ^{-(p-1)})\r), & \text{ when } \beta =-1.
\end{array}
\r.
\end{align*}
\end{proof}


\appendix


\section{Lemmas}
\label{secA}

We give the estimates of the lifespan for the ordinary differential inequalities.

\begin{lemma}
\label{lemA.1}
Let $f$ satisfy the following inequality.
\begin{align}
\l\{
\begin{array}{ll}
(t+1)^{\beta} f''(t) + C_1 f'(t) \geq C_2 \frac{ f(t)^p }{ t+1},  & \text{for } t_0\leq  t <T,
\\
f(t_0)>0,
\\
f'(t_0) \geq 0,
\end{array}
\r.
\end{align}
where $C_1\geq 1$ and $C_2>0$ are constants, $p>1$, and $t_0\geq 0$ satisfies $C_1-e^{(\beta-1)\log(t_0+1)} >0$. Then, we have
\begin{align*}
T \leq  \exp (C \eps^{-(p-1)}).
\end{align*}
\end{lemma}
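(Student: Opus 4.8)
The plan is to turn the second–order damped inequality into a first–order integral inequality and then to run the critical–Fujita iteration of Li--Zhou and Lai--Zhou on the latter; throughout, the role of "$\eps$" is played by $f(t_0)$ (in the application $f=W$ and $f(t_0)=W(t_\eps)\ge\eps$, so $f(t_0)^{-(p-1)}\le\eps^{-(p-1)}$). First I would fix the sign of $f$: multiplying the hypothesis by $(t+1)^{-\beta}$ and introducing the integrating factor $m(t):=\exp\!\big(C_1\int_{t_0}^{t}(s+1)^{-\beta}ds\big)\ge 1$ (increasing, since $\beta<1$) gives $\big(m(t)f'(t)\big)'\ge C_2\,m(t)(t+1)^{-\beta-1}f(t)^p$. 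While $f>0$ the right–hand side is $\ge0$, hence $m(t)f'(t)\ge m(t_0)f'(t_0)=f'(t_0)\ge0$, so $f'\ge0$ and $f$ is nondecreasing; a connectedness argument then propagates $f\ge f(t_0)=:a>0$ and $f'\ge0$ to all of $[t_0,T)$.

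Next I would integrate $\big(m f'\big)'\ge C_2\,m(s)(s+1)^{-\beta-1}f(s)^p$ twice (first dividing by $m$) and exchange the order of integration to obtain $f(t)-a\ge C_2\int_{t_0}^{t}m(s)(s+1)^{-\beta-1}f(s)^p\big(\int_s^t m(r)^{-1}dr\big)\,ds$. The key is the kernel bound $m(s)\int_s^{t}m(r)^{-1}dr\gtrsim(s+1)^{\beta}$ for $s\le t-1$: changing variables $v=\int_s^r(u+1)^{-\beta}du$ turns it into $\int_0^{v(t)}e^{-C_1v}(r+1)^{\beta}\,dv$ with $(r+1)^{1-\beta}=(1-\beta)v+(s+1)^{1-\beta}$, and since $e^{-C_1v}$ concentrates on bounded $v$ this is $\gtrsim(s+1)^{\beta}$ — this is where $\beta<1$ and the hypothesis $C_1>(t_0+1)^{\beta-1}$ (equivalently $C_1-\beta(t+1)^{\beta-1}\ge c>0$) are used. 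Multiplying the weight $(s+1)^{-\beta-1}$ by $(s+1)^{\beta}$ gives exactly $(s+1)^{-1}$, which is where the criticality $p=p_F$ enters, so that for $t\ge t_0+1$,
\[
f(t)\ \ge\ a+c_0\int_{t_0}^{t-1}\frac{f(s)^p}{s+1}\,ds ,
\]
with $c_0>0$ depending on $C_1,C_2,\beta,t_0$.

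Then I would iterate. Plugging $f\ge a$ into the last inequality gives $f(t)\ge a+c_0a^{p}\log\frac{t}{t_0+1}$ on $t\ge t_0+1$; reinserting the improved bounds and iterating yields a self–improving family of lower bounds. Because the weight is the critical $(s+1)^{-1}$, $f$ grows only logarithmically at the start, and the standard Li--Zhou bookkeeping (cf. \cite{LiZh95,LaZh17a}) shows that $t$ must be multiplied by a factor $\exp\!\big(Ca^{-(p-1)}\big)$ before $f$ exceeds, say, $2a$ and then any fixed size; from there the same first–order inequality forces $f\to\infty$ within a further bounded factor of time. Hence $T\le(t_0+1)\exp\!\big(Ca^{-(p-1)}\big)\le\exp\!\big(C'\eps^{-(p-1)}\big)$.

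I expect the iteration step to be the main obstacle: the damping term $C_1f'$ blocks the naive ``$f''\gtrsim f^p$'' blow-up (which would only give a polynomial lifespan), so in the damping–dominated regime $f$ may grow merely logarithmically, and recovering the sharp exponential lifespan requires summing the critical iteration carefully at $p=p_F$. A cleaner, equivalent route for the reduction step is to change time by $\tau=\int_{t_0}^{t}(s+1)^{-\beta}ds$: a direct computation gives $\tilde f_{\tau\tau}+(C_1-\beta(t+1)^{\beta-1})\tilde f_\tau\ge C_2(t+1)^{\beta-1}\tilde f^p$, and since $\tau+1\lesssim(t+1)^{1-\beta}$ this is exactly the $\beta=0$ inequality $\tilde f_{\tau\tau}+(C_1+1)\tilde f_\tau\gtrsim \tilde f(\tau)^p/(\tau+1)$ treated by Lai--Zhou, whose conclusion transfers back to $T$.
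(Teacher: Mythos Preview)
Your approach is sound but takes a genuinely different route from the paper. The paper performs a single logarithmic change of variable $\sigma=\log(t+1)$, sets $h(\sigma)=f(e^\sigma-1)$, and obtains
\[
e^{(\beta-1)\sigma}h''(\sigma)+(C_1-e^{(\beta-1)\sigma})h'(\sigma)\ \ge\ C_2\,h(\sigma)^p .
\]
Since $\beta<1$, the coefficient $e^{(\beta-1)\sigma}\le1$ decays; the paper then invokes the comparison machinery of Li--Zhou \cite{LiZh95} (their Corollary~3.1, Lemmas~3.1--3.2) to replace the inequality by the autonomous one $h_1''+h_1'\ge C\,h_1^p$ with smaller data, and quotes their Theorem~3.1 for the lifespan $T_h\lesssim\eps^{-(p-1)}$, which becomes $T\le\exp(C\eps^{-(p-1)})$ after undoing the substitution. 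The hypothesis $C_1>(t_0+1)^{\beta-1}$ is used precisely to make the coefficient of $h'$ positive so that the Li--Zhou lemmas apply.

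By contrast, your primary route (integrating factor $m$, double integration, iteration on the integral inequality $f(t)\ge a+c_0\int\frac{f^p}{s+1}\,ds$) is self-contained and avoids the comparison lemmas, which is a genuine advantage. Two points need tightening: first, your kernel bound $m(s)\int_s^t m(r)^{-1}\,dr\gtrsim(s+1)^\beta$ fails under the restriction $s\le t-1$ when $\beta\in(0,1)$, since then $M(t)-M(s)\approx C_1(t-s)(s+1)^{-\beta}\to0$; you need something like $s\le t/2$ (or $t-s\gtrsim(s+1)^\beta$) instead, which introduces a harmless delay in the subsequent iteration. Second, ``standard Li--Zhou bookkeeping'' for the critical integral inequality is doing real work and should be written out; after the logarithmic change $L=\log(t+1)$ one gets a delayed Volterra inequality $F(L)\ge a+c_0\int_{L_0}^{L-c}F^p\,d\ell$, whose blow-up time $\lesssim a^{-(p-1)}$ follows, but not in one line. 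Your alternative time change $\tau=\int(s+1)^{-\beta}ds$ is correct and in spirit closest to the paper's method: it reduces to the $\beta=0$ case of the very same lemma, which one then handles exactly as the paper does (or by citing Lai--Zhou), so it adds one layer rather than saving one. Note also that the hypothesis $C_1>(t_0+1)^{\beta-1}$ is \emph{not} equivalent to your parenthetical $C_1-\beta(t+1)^{\beta-1}\ge c>0$; in fact your integrating-factor argument for $f'\ge0$ does not use that hypothesis at all.
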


\begin{proof}
Substituting $e^{t}-1$ into $t$ in the ordinary differential inequality, we obtain
\begin{align}
\label{eqA.2}
e^{\beta t} f''(e^{t}-1) +C_1 f'(e^{t}-1) \geq C_2 \frac{f(e^{t}-1)^p}{e^{t}}
\end{align}
for any $t \geq \log(t_0+1)$. 
We set $h(t):=f(e^{t}-1)$. Then, we have
\begin{align*}
h'(t)&=f'(e^{t}-1)e^{t},
\\
h''(t)&=f''(e^{t}-1)e^{2t} +h'(t).
\end{align*}
Therefore, by \eqref{eqA.2}, we have
\begin{align*}
e^{(\beta -1)t} h''(t)  +(C_1 -e^{(\beta -1)t}) h'(t)  \geq C_2 h(t)^p
\end{align*}
for any $t \geq \log(t_0+1)$. Since 
\begin{align*}
0 &< f(t_0) = h( \log(t_0+1)),
\\
0 & \leq f'(t_0) = h'( \log(t_0+1))(t_0+1)^{-1}
\end{align*}
mean that $h(\tau_0)>0$ and $h'(\tau_0) \geq0$, where $\tau_0:= \log(t_0+1)$, we obtain
\begin{align*}
\l\{
\begin{array}{ll}
e^{ (\beta -1)\tau } h''(\tau)  +(C_1 -e^{(\beta -1)\tau}) h'(\tau)  \geq C_2h(\tau)^p 
 \text{ for } \tau \geq \tau_0,
\\
h(\tau_0)>0,
\\
h'(\tau_0)\geq 0.
\end{array}
\r.
\end{align*}
Since $C_1 -e^{(\beta -1)\tau} >0$ for $\tau \geq \tau_0$, we have $h'(\tau)>0$ by Corollary 3.1 in  \cite{LiZh95}. By $C_1 -e^{(\beta -1)\tau} \leq C_1$ for $\tau \geq \tau_0$ and $h'(\tau)>0$, we have 
\begin{align*}
\l\{
\begin{array}{ll}
e^{ (\beta -1)\tau } C_1^{-1} h''(\tau)  + h'(\tau)  \geq C_2 C_1^{-1} h(\tau)^p 
 \text{ for } \tau \geq \tau_0,
\\
h(\tau_0)>0,
\\
h'(\tau_0)\geq 0.
\end{array}
\r.
\end{align*}
Let $h_1$ satisfy
\begin{align*}
\l\{
\begin{array}{ll}
e^{ (\beta -1)\tau } C_1^{-1} h_1''(\tau)  + h_1'(\tau)  = C_2 C_1^{-1}h_1(\tau)^p 
 \text{ for } \tau \geq \tau_0,
\\
h_1(\tau_0)=\frac{1}{2} h(\tau_0)>0,
\\
h_1'(\tau_0)= 0.
\end{array}
\r.
\end{align*}
Then, Lemma 3.1 in \cite{LiZh95} gives us that $h_1'(\tau)<h'(\tau)$.
Integrating this on $[\tau_0,\tau]$, we get 
\begin{align*}
h_1(\tau) - h_1(\tau_0) < h(\tau) - h(\tau_0),
\end{align*}
which implies $h_1(\tau) < h(\tau)$ for $\tau>\tau_0$ since $h_1(\tau_0)=\frac{1}{2} h(\tau_0)< h(\tau_0)$. Lemma 3.2 in \cite{LiZh95} implies that $h_1''(\tau)>0$ for $\tau\geq \tau_0$. Thus we get
\begin{align*}
C_1^{-1}h_1''(\tau)+h_1'(\tau) \geq C_2 C_1^{-1} h_1(\tau)^p,
\end{align*}
since $e^{ (\beta -1)\tau } \leq 1$. Corollary 3.1' in  \cite{LiZh95} implies $h_1'(\tau)>0$. Therefore, $h_1$ satisfies 
\begin{align*}
\l\{
\begin{array}{ll}
h_1''(\tau)  + h_1'(\tau)  \geq Ch_1(\tau)^p 
 \text{ for } \tau \geq \tau_0,
\\
h_1(\tau_0)>0,
\\
h_1'(\tau_0)= 0.
\end{array}
\r.
\end{align*}
Then, the lifespan of $h_1$ is estimated by $T_{h_1}(\eps) \cleq \eps^{-(p-1)}$ (see Theorem 3.1 in \cite{LiZh95}). Therefore, since $h(\tau)>h_1(\tau)$ for $\tau>\tau_0$, the lifespan of $h$ is also estimated by $T_{h}(\eps) \cleq \eps^{-(p-1)}$. Changing variables, the lifespan of $f$ is estimated by
\begin{align*}
T_{f}(\eps) \leq  \exp \l(C\eps^{-(p-1)}\r).
\end{align*}
\end{proof}

\begin{lemma}
\label{lemA.2}
Let $f$ satisfy the following inequality.
\begin{align}
\l\{
\begin{array}{ll}
(t+1)^{-1} f''(t) + C_1 f'(t) \geq C_2 \frac{f(t)^{p}}{(t+1)\{\log(t+1)+1\}},  & \text{for } t_0\leq  t <T,
\\
f(t_0)>0,
\\
f'(t_0) \geq 0,
\end{array}
\r.
\end{align}
where $C_1 \geq 1$ and $C_2>0$ are constants, $p>1$, and $t_0\geq 0$ satisfies $C_1-e^{-2(e^{\tau_0}-1)}- e^{-2(e^{\tau_0}-1)} e^{-\tau_0} >0$, where $\tau_0=\log\{ \log (t_0+1)+1\}$. Then, we have
\begin{align*}
T \leq \exp( \exp (C \eps^{-(p-1)})).
\end{align*}
\end{lemma}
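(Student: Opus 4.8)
The plan is to imitate the proof of Lemma~\ref{lemA.1}, replacing the single exponential substitution by a \emph{double} exponential one, which is dictated by the double-exponential reference rate on the right-hand side. Concretely I would set $t+1 = e^{e^{\tau}-1}$, i.e.\ $\tau = \log\{\log(t+1)+1\}$, and put $h(\tau) := f(e^{e^{\tau}-1}-1)$. With $\phi(\tau) := e^{e^{\tau}-1}-1$ one computes, using $e^{\tau} = \log(t+1)+1$ and $t+1 = e^{e^{\tau}-1}$,
\[
\phi'(\tau) = (t+1)(\log(t+1)+1), \qquad \phi''(\tau) = (t+1)(\log(t+1)+1)(\log(t+1)+2),
\]
hence $h'(\tau) = f'(t)\phi'(\tau)$ and $h''(\tau) = f''(t)\phi'(\tau)^2 + f'(t)\phi''(\tau)$, so that $f''(t) = \phi'(\tau)^{-2}\{h''(\tau) - (e^{\tau}+1)h'(\tau)\}$. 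Substituting this into the hypothesis and multiplying through by $\phi'(\tau) = (t+1)(\log(t+1)+1)$, all powers of $t+1$ and $\log(t+1)+1$ collapse to $(t+1)^{2}(\log(t+1)+1) = e^{2(e^{\tau}-1)}e^{\tau}$, and I expect to obtain
\[
e^{-2(e^{\tau}-1)}e^{-\tau}h''(\tau) + \bigl(C_1 - e^{-2(e^{\tau}-1)} - e^{-2(e^{\tau}-1)}e^{-\tau}\bigr)h'(\tau) \geq C_2 h(\tau)^{p}
\]
for $\tau \geq \tau_0 := \log\{\log(t_0+1)+1\}$, together with $h(\tau_0) = f(t_0) > 0$ and $h'(\tau_0) = f'(t_0)(t_0+1)(\log(t_0+1)+1) \geq 0$ (note $\phi(\tau_0) = t_0$). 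The coefficient of $h'$ evaluated at $\tau_0$ is precisely the quantity assumed positive in the statement, which is the consistency check that the computation is right.

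The next step is to verify the structural facts needed to invoke the Li--Zhou lemmas. Since $t_0 \geq 0$ forces $\tau_0 \geq 0$, on $[\tau_0,\infty)$ the quantity $e^{\tau}-1$ is nonnegative and increasing, so both $e^{-2(e^{\tau}-1)}$ and $e^{-2(e^{\tau}-1)}e^{-\tau}$ are positive and decreasing; hence $C_1 - e^{-2(e^{\tau}-1)} - e^{-2(e^{\tau}-1)}e^{-\tau}$ is increasing, and being positive at $\tau_0$ it remains positive on $[\tau_0,\infty)$ and is obviously bounded by $C_1$. Moreover the leading coefficient satisfies $e^{-2(e^{\tau}-1)}e^{-\tau} \leq 1$ for $\tau \geq \tau_0 \geq 0$. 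With these properties established, I would run verbatim the comparison scheme from the proof of Lemma~\ref{lemA.1}: Corollary~3.1 of \cite{LiZh95} yields $h'(\tau) > 0$; bounding the $h'$-coefficient by $C_1$ and dividing gives $e^{-2(e^{\tau}-1)}e^{-\tau}C_1^{-1}h'' + h' \geq C_2 C_1^{-1}h^{p}$; introducing the solution $h_1$ of the associated equation with $h_1(\tau_0) = \tfrac12 h(\tau_0)$ and $h_1'(\tau_0) = 0$, Lemmas~3.1--3.2 of \cite{LiZh95} give $h_1' < h'$, $h_1 < h$ and $h_1'' > 0$; finally $e^{-2(e^{\tau}-1)}e^{-\tau} \leq 1$ together with $h_1'' > 0$ upgrade the inequality to $h_1'' + h_1' \geq C h_1^{p}$ with $h_1(\tau_0) > 0$, $h_1'(\tau_0) = 0$.

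Theorem~3.1 of \cite{LiZh95} then bounds the lifespan of $h_1$, and hence of $h$ since $h > h_1$ on $(\tau_0,\infty)$, by $C\eps^{-(p-1)}$ in the $\tau$ variable (in the application to Theorem~\ref{thm1.1} one uses $W(t_\eps) = Z(t_\eps)+\eps \geq \eps$, so $h_1(\tau_0) \gtrsim \eps$). Undoing the substitution via $t = e^{e^{\tau}-1}-1$, a $\tau$-lifespan of size $\lesssim \eps^{-(p-1)}$ becomes $T \leq \exp(\exp(C\eps^{-(p-1)}))$, which is the assertion. The only genuine work is the bookkeeping that produces the transformed inequality with exactly the stated coefficient, together with the monotonicity check that keeps that coefficient positive; once those are in place the argument is an exact replay of the proof of Lemma~\ref{lemA.1}.
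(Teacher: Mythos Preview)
Your proposal is correct and follows essentially the same argument as the paper's own proof: the same double-exponential substitution $t=e^{e^{\tau}-1}-1$, the same transformed inequality with coefficient $C_1-e^{-2(e^{\tau}-1)}-e^{-2(e^{\tau}-1)}e^{-\tau}$, and the same chain of Li--Zhou lemmas applied to $h$ and the comparison function $h_1$. Your explicit monotonicity justification for the positivity of the $h'$-coefficient on $[\tau_0,\infty)$ is a small addition the paper leaves implicit, but otherwise the two proofs coincide step for step.
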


\begin{proof}
Substituting $e^{e^{t}-1}-1$ into $t$ in the ordinary differential inequality, we obtain
\begin{align}
\label{eqA.4}
e^{-e^{t}+1} f''(e^{e^{t}-1}-1) +C_1 f'(e^{e^{t}-1}-1) \geq C_2 \frac{f(e^{e^{t}-1}-1)^p}{e^{e^{t}-1}e^{t}}
\end{align}
for any $t \geq \log \{ \log(t_0+1)+1 \}$. 
We set $h(t):=f(e^{e^{t}-1}-1)$. Then, we have
\begin{align*}
h'(t)&=f'(e^{e^{t}-1}-1)e^{e^{t}-1}e^{t},
\\
h''(t)&=f''(e^{e^{t}-1}-1)e^{2(e^{t}-1)} e^{2t}+(e^{t}+1) h'(t).
\end{align*}
Therefore, by \eqref{eqA.4}, we have
\begin{align*}
e^{-2(e^{t}-1)}e^{-t} h''(t)  +\{C_1-e^{-2(e^{t}-1)}- e^{-2(e^{t}-1)} e^{-t}\} h'(t)  \geq C_2 h(t)^p
\end{align*}
for any $t \geq \log \{ \log(t_0+1)+1 \}$. Since 
\begin{align*}
0 &< f(t_0) = h( \log \{ \log(t_0+1)+1 \}),
\\
0 & \leq f'(t_0) = h'( \log \{ \log(t_0+1)+1 \})(t_0+1) \{ \log(t_0+1)+1 \} 
\end{align*}
mean that $h(\tau_0)>0$ and $h'(t_0) \geq0$, where $\tau_0:= \log \{ \log(t_0+1)+1 \}$, we obtain
\begin{align*}
\l\{
\begin{array}{ll}
e^{-2(e^{\tau}-1)}e^{-\tau} h''(\tau)  +\{C_1-e^{-2(e^{\tau}-1)}- e^{-2(e^{\tau}-1)} e^{-\tau}\} h'(\tau)  \geq C_2 h(\tau)^p 
 \text{ for } \tau \geq \tau_0,
\\
h(\tau_0)>0,
\\
h'(\tau_0)\geq 0.
\end{array}
\r.
\end{align*}
Since $C_1-e^{-2(e^{\tau}-1)}- e^{-2(e^{\tau}-1)} e^{-\tau} >0$ for $\tau \geq \tau_0$, we have $h'(\tau)>0$ by Corollary 3.1 in  \cite{LiZh95}. By $C_1-e^{-2(e^{\tau}-1)}- e^{-2(e^{\tau}-1)} e^{-\tau} \leq C_1$ for $\tau \geq \tau_0$ and $h'(\tau)>0$, we have 
\begin{align*}
\l\{
\begin{array}{ll}
e^{-2(e^{\tau}-1)}e^{-\tau} C_1^{-1} h''(\tau)  + h'(\tau)  \geq C_2 C_1^{-1}h(\tau)^p 
 \text{ for } \tau \geq \tau_0,
\\
h(\tau_0)>0,
\\
h'(\tau_0)\geq 0.
\end{array}
\r.
\end{align*}
Let $h_1$ satisfy
\begin{align*}
\l\{
\begin{array}{ll}
e^{-2(e^{\tau}-1)}e^{-\tau}C_1^{-1} h_1''(\tau)  + h_1 '(\tau)  = C_2 C_1^{-1}h_1(\tau)^p 
 \text{ for } \tau \geq \tau_0,
\\
h_1(\tau_0)=\frac{1}{2} h(\tau_0)>0,
\\
h_1'(\tau_0)= 0.
\end{array}
\r.
\end{align*}
Then, Lemma 3.1 in \cite{LiZh95} gives us that $h_1'(\tau)<h'(\tau)$.
Integrating this on $[\tau_0,\tau]$, we get 
\begin{align*}
h_1(\tau) - h_1(\tau_0) < h(\tau) - h(\tau_0),
\end{align*}
which implies $h_1(\tau) < h(\tau)$ for $\tau>\tau_0$ since $h_1(\tau_0)=\frac{1}{2} h(\tau_0)< h(\tau_0)$. Lemma 3.2 in \cite{LiZh95} implies that $h_1''(\tau)>0$ for $\tau\geq \tau_0$. Thus we get
\begin{align*}
C_1^{-1} h_1''(\tau)+h_1'(\tau) \geq C_2 C_1^{-1} h_1(\tau)^p,
\end{align*}
since $e^{-2(e^{\tau}-1)}e^{-\tau}\leq 1$. Corollary 3.1' in \cite{LiZh95} implies $h_1'(\tau)>0$. Therefore, $h_1$ satisfies 
\begin{align*}
\l\{
\begin{array}{ll}
h_1''(\tau)  + h_1'(\tau)  \geq Ch_1(\tau)^p 
 \text{ for } \tau \geq \tau_0,
\\
h_1(\tau_0)>0,
\\
h_1'(\tau_0)= 0.
\end{array}
\r.
\end{align*}
Then, the lifespan of $h_1$ is estimated by $T_{h_1}(\eps) \cleq \eps^{-(p-1)}$ (see Theorem 3.1 in \cite{LiZh95}). Therefore, since $h(\tau)>h_1(\tau)$ for $\tau>\tau_0$, the lifespan of $h$ is also estimated by $T_{h}(\eps) \cleq \eps^{-(p-1)}$. Changing variables, the lifespan of $f$ is estimated by
\begin{align*}
T_{f}(\eps) \leq \exp \l(\exp \l(C\eps^{-(p-1)}\r) \r).
\end{align*}
\end{proof}


%



We give the proof of Lemma \ref{lem2.2}. 

\begin{proof}[Proof of Lemma \ref{lem2.2}]
The proof of (i) and (ii) can be seen in Lin--Nishihara--Zhai. However, we give the proofs for the reader's convenience. 

(i). By the explicit formula of $g$ and the l'H\^{o}pital theorem, we obtain
\begin{align*}
\lim_{t \to \infty} b(t)g(t) 
&=\lim_{t \to \infty} \frac{\int_{0}^{\infty} e^{-\int_{0}^{\tau} b(s)ds}d\tau- \int_{0}^{t} e^{-\int_{0}^{\tau} b(s)ds}d\tau}{b(t)^{-1}e^{-\int_{0}^{t} b(s) ds}}
\\
&=\lim_{t \to \infty} \frac{-e^{-\int_{0}^{t} b(s)ds}d\tau}{-b(t)^{-2}b'(t)e^{-\int_{0}^{t} b(s) ds}-e^{-\int_{0}^{t} b(s) ds}}
\\
&=\lim_{t \to \infty} \frac{1}{b(t)^{-2}b'(t)+1}
\\
&=1,
\end{align*}
where we have used $|b(t)^{-2}b'(t)| \cleq (t+1)^{\beta-1} \to 0$ as $t \to \infty$ when $\beta<1$.

(ii). Since $b(t)g(t)>0$ for any $t \in[0,\infty)$ and (i) hold, $bg$ has the positive minimum $m$ and the maximum $M$. Therefore, 
we obtain $m \leq b(t)g(t) \leq M$ for any $t \in[0,\infty)$. This implies the statement. 

(iii). Since $(b(t)^{-1})'=-b'(t)b(t)^{-2}$ and $g'(t)=b(t)g(t)-1$, we have
\begin{align*}
\frac{g'(t)}{(b(t)^{-1})'} 
= \frac{b(t)g(t)-1}{-b'(t)b(t)^{-2}}.
\end{align*}
By the explicit formula of $g$ and the l'H\^{o}pital lemma, we have
\begin{align*}
&\lim_{t \to \infty} \frac{b(t)g(t)-1}{-b'(t)b(t)^{-2}}
\\
&=\lim_{t \to \infty}  \frac{b(t) e^{\int_{0}^{t} b(s) ds} \l( \int_{0}^{\infty} e^{-\int_{0}^{\tau} b(s)ds}d\tau- \int_{0}^{t} e^{-\int_{0}^{\tau} b(s)ds}d\tau \r) -1 }{-b'(t)b(t)^{-2}} 
\\
&=\lim_{t \to \infty}  \frac{ \int_{0}^{\infty} e^{-\int_{0}^{\tau} b(s)ds}d\tau- \int_{0}^{t} e^{-\int_{0}^{\tau} b(s)ds}d\tau  -b(t)^{-1} e^{-\int_{0}^{t} b(s) ds}  }{-b'(t)b(t)^{-3} e^{-\int_{0}^{t} b(s) ds}}
\\
&=\lim_{t \to \infty}  \frac{- e^{-\int_{0}^{t} b(s)ds}  + b'(t)b(t)^{-2} e^{-\int_{0}^{t} b(s) ds} + e^{-\int_{0}^{t} b(s) ds}  }{-b''(t)b(t)^{-3} e^{-\int_{0}^{t} b(s) ds} +3b'(t)^2b(t)^{-4} e^{-\int_{0}^{t} b(s) ds} +b'(t)b(t)^{-2} e^{-\int_{0}^{t} b(s) ds}}
\\
&=\lim_{t \to \infty}  \frac{b'(t)b(t)^{-2}  }{-b''(t)b(t)^{-3}  +3b'(t)^2b(t)^{-4}  +b'(t)b(t)^{-2} }
\\
&=\lim_{t \to \infty}  \frac{1 }{\frac{-b''(t)}{b'(t)b(t) }  +3\frac{b'(t)}{b(t)^{2}} +1 }.
\end{align*}
By the assumption (B2)--(B4) and $\beta<1$, we get
\begin{align*}
\l| \frac{-b''(t)}{b'(t)b(t) } \r|
& \cleq \frac{(t+1)^{-\beta-2}}{(t+1)^{-\beta-1}  (t+1)^{-\beta}}=(t+1)^{\beta-1} \to 0,
\\
\l| \frac{b'(t)}{b(t)^{2}}  \r|
& \cleq \frac{ (t+1)^{-\beta-1}}{(t+1)^{-2\beta}}=(t+1)^{\beta-1} \to 0,
\end{align*}
as $t \to \infty$. Thus, we obtain
\begin{align*}
\lim_{t \to \infty} \frac{b(t)g(t)-1}{-b'(t)b(t)^{-2}}
=\lim_{t \to \infty}  \frac{1 }{\frac{-b''(t)}{b'(t)b(t) }  +3\frac{b'(t)}{b(t)^{2}} +1 }=1.
\end{align*}

By this and $(b(t)^{-1})'=-b'(t)b(t)^{-2}\ceq (t+1)^{\beta-1}$, we obtain $|g'(t)| \ceq (t+1)^{\beta-1}$ for large $t>0$.

(iv). $G$ is increasing since $g(t)>0$ for any $t\in[0,\infty)$. Moreover, since $g\thickapprox b^{-1}$, we have
\begin{align*}
G(t)+1
& \thickapprox \int_{0}^{t} b(s)^{-1} ds +1
\\
& =  \int_{0}^{t} (s+1)^{\beta} ds +1
\\
& \thickapprox 
\begin{cases}
\log(t+1)+1, & \text{ if } \beta=-1,
\\
(t+1)^{\beta+1}, & \text{ if } \beta \in (-1,1). 
\end{cases} 
\end{align*}

(v). $\Gamma$ is increasing since $g(t)>0$ for any $t\in[0,\infty)$. Moreover, since $g\thickapprox b^{-1}$, we have
\begin{align*}
\Gamma(t)+1
 \thickapprox \int_{0}^{t} b(s) ds +1
 =  \int_{0}^{t} (s+1)^{-\beta} ds +1
 \thickapprox  (t+1)^{-\beta+1}.
\end{align*}
\end{proof}



\begin{thebibliography}{99}

\bibitem{DLR13}
Marcello D'Abbicco, Sandra Lucente, Michael Reissig, 
\textit{Semi-linear wave equations with effective damping}, 
Chin. Ann. Math. Ser. B {\bf 34} (2013), no. 3, 345--380.

\bibitem{DLR15}
Marcello D'Abbicco, Sandra Lucente, Michael Reissig, 
\textit{A shift in the Strauss exponent for semilinear wave equations with a not effective damping}, 
J. Differential Equations {\bf 259} (2015), no. 10, 5040--5073.


\bibitem{Fuj66}
Hiroshi Fujita,
\textit{On the blowing up of solutions of the Cauchy problem for $u_t=\Delta u + u^{1+\alpha}$}, 
J. Fac. Sci. Univ. Tokyo Sect. I {\bf 13} (1966), 109--124.

\bibitem{FIW16a}
Kazumasa Fujiwara, Masahiro Ikeda, Yuta Wakasugi, 
\textit{Estimates of lifespan and blow-up rates for the wave equation with a time-dependent damping and a power-type nonlinearity},
preprint, 
\arXiv{1609.01035}.


\bibitem{Hay73}
Kantaro Hayakawa, 
\textit{On nonexistence of global solutions of some semilinear parabolic differential equations},
Proc. Japan Acad. {\bf 49} (1973), 503--505.

\bibitem{IkOg16}
Masahiro Ikeda, Takayoshi Ogawa, 
\textit{Lifespan of solutions to the damped wave equation with a critical nonlinearity}, 
J. Differential Equations {\bf 261} (2016), no. 3, 1880--1903.


\bibitem{IkWa15}
Masahiro Ikeda, Yuta Wakasugi, 
\textit{A note on the lifespan of solutions to the semilinear damped wave equation},
Proc. Amer. Math. Soc. {\bf 143} (2015), no. 1, 163--171.


\bibitem{IkWap}
Masahiro Ikeda, Yuta Wakasugi, 
\textit{A remark on the global existence for the semilinear damped wave equation in the overdamping case},
in preparation. 


\bibitem{KST77}
Kusuo Kobayashi, Tunekiti Sirao, Hiroshi, Tanaka, 
\textit{On the growing up problem for semilinear heat equations},
J. Math. Soc. Japan {\bf 29} (1977), no. 3, 407--424.


\bibitem{LTW17a}
Ning-An Lai, Hiroyuki Takamura, Kyouhei Wakasa,
\textit{Blow-up for semilinear wave equations with the scale invariant damping and super-Fujita exponent},
to appear in Journal of Differential Equations, \href{https://doi.org/10.1016/j.jde.2017.06.017}{https://doi.org/10.1016/j.jde.2017.06.017}.

\bibitem{LaZh17a} 
Ning-An Lai, Yi Zhou,
\textit{The sharp lifespan estimate for semilinear damped wave equation with Fujita critical power in high dimensions}, 
preprint, 
\arXiv{1702.07073}.

\bibitem{LeNi92}
Tzong-Yow Lee, Wei-Ming Ni, 
\textit{Global existence, large time behavior and life span of solutions of a semilinear parabolic Cauchy problem}, 
Trans. Amer. Math. Soc. {\bf 333} (1992), no. 1, 365--378.

\bibitem{LiZh95}
Ta Tsien Li, Yi Zhou, 
\textit{Breakdown of solutions to $\square u + u_t =|u|^{1+\alpha}$}, 
Discrete Contin. Dynam. Systems {\bf 1} (1995), no. 4, 503--520. 

\bibitem{LNZ12}
Jiayun Lin, Kenji Nishihara, Jian Zhai, 
\textit{Critical exponent for the semilinear wave equation with time-dependent damping}, 
Discrete Contin. Dyn. Syst. {\bf 32} (2012), no. 12, 4307--4320.

\bibitem{Nis03}
Kenji Nishihara, 
\textit{$L^p$-$L^q$ estimates for the 3-D damped wave equation and their application to the semilinear problem},
Seminar Notes of Math. Sci. {\bf 6}, Ibaraki Univ. (2003), 69--83.


\bibitem{Nis11}
Kenji Nishihara, 
\textit{Asymptotic behavior of solutions to the semilinear wave equation with time-dependent damping},
Tokyo J. Math. {\bf 34} (2011), no. 2, 327--343.



\bibitem{Str11}
Brian Straughan, 
\textit{Heat waves}, 
Applied Mathematical Sciences, {\bf 177}. Springer, New York, 2011. xii+318 pp. 
ISBN: 978-1-4614-0492-7.

\bibitem{Sug75}
Sadao Sugitani,
\textit{On nonexistence of global solutions for some nonlinear integral equations}, 
Osaka J. Math. {\bf 12} (1975), 45--51. 

\bibitem{ToYo01}
Grozdena Todorova, Borislav  Yordanov, 
\textit{Critical exponent for a nonlinear wave equation with damping}, 
J. Differential Equations {\bf 174} (2001), no. 2, 464--489.


\bibitem{Wak17}
Yuta Wakasugi,
\textit{Scaling variables and asymptotic profiles for the semilinear damped wave equation with variable coefficients}, 
J. Math. Anal. Appl. {\bf 447} (2017), no. 1, 452--487.

\bibitem{Wei81}
Fred B. Weissler, 
\textit{Existence and nonexistence of global solutions for a semilinear heat equation}, 
Israel J. Math. {\bf 38} (1981), no. 1-2, 29--40.

\bibitem{Zha01}
Qi S. Zhang, 
\textit{A blow-up result for a nonlinear wave equation with damping: the critical case},
 C. R. Acad. Sci. Paris S\'er. I Math. {\bf 333} (2001), no. 2, 109--114. 


\end{thebibliography}
\end{document}